\newtheorem{theorem}{Theorem}
\newtheorem{lemma}[theorem]{Lemma}
\newtheorem{corollary}[theorem]{Corollary}
\theoremstyle{definition}
\newtheorem{definition}[theorem]{Definition}
\theoremstyle{remark}
\newtheorem{remark}[theorem]{\bf Remark}
\numberwithin{equation}{section}
\numberwithin{theorem}{section}
\newcommand{\intav}[1]{\mathchoice {\mathop{\vrule width 6pt height 3 pt depth  -2.5pt
\kern -8pt \intop}\nolimits_{\kern -6pt#1}} {\mathop{\vrule width
5pt height 3  pt depth -2.6pt \kern -6pt \intop}\nolimits_{#1}}
{\mathop{\vrule width 5pt height 3 pt depth -2.6pt \kern -6pt
\intop}\nolimits_{#1}} {\mathop{\vrule width 5pt height 3 pt depth
-2.6pt \kern -6pt \intop}\nolimits_{#1}}}
\newcommand{\intavl}[1]{\mathchoice {\mathop{\vrule width 6pt height 3 pt depth  -2.5pt
\kern -8pt \intop}\limits_{\kern -6pt#1}} {\mathop{\vrule width 5pt
height 3  pt depth -2.6pt \kern -6pt \intop}\nolimits_{#1}}
{\mathop{\vrule width 5pt height 3 pt depth -2.6pt \kern -6pt
\intop}\nolimits_{#1}} {\mathop{\vrule width 5pt height 3 pt depth
-2.6pt \kern -6pt \intop}\nolimits_{#1}}}
\newcommand{\R}{\mathbb{R}}
\newcommand{\T}{\mathbb{T}}
\newcommand{\N}{\mathbb{N}}
\newcommand{\Z}{\mathbb{Z}}
\begin{document}

\title[Skew products in infinite measure]{Ergodic properties of skew products\\ in infinite measure}

\author[Patr\'icia Cirilo]{Patr\'icia Cirilo}
\address{Universidade Estadual Paulista, Rua Crist\'ov\~ao Colombo 2265, 15054-000, S\~ao Jos\'e do Rio Preto, Brasil.}
\email{prcirilo@ibilce.unesp.br}

\author[Yuri Lima]{Yuri Lima}
\address{Weizmann Institute of Science, Faculty of Mathematics and Computer Science, POB 26, 76100, Rehovot, Israel.}
\email{yuri.lima@weizmann.ac.il}

\author[Enrique Pujals]{Enrique Pujals}
\address{Instituto Nacional de Matem\'atica Pura e Aplicada, Estrada Dona Castorina 110, 22460-320, Rio de Janeiro, Brasil.}
\email{enrique@impa.br}

\subjclass[2010]{Primary: 37A25, 37A40. Secondary: 60F05.}

\date{\today}

\keywords{infinite ergodic theory, local limit theorem, random dynamical system, rational ergodicity, skew product.}

\begin{abstract}
Let $(\Omega,\mu)$ be a shift of finite type with a Markov probability, and
$(Y,\nu)$ a non-atomic standard measure space. For each symbol $i$ of the
symbolic space, let $\Phi_i$ be a measure-preserving automorphism of
$(Y,\nu)$. We study skew products of the form
$(\omega,y)\mapsto(\sigma\omega,\Phi_{\omega_0}(y))$, where $\sigma$ is
the shift map on $(\Omega,\mu)$. We prove that, when the skew product is conservative,
it is ergodic if and only if the $\Phi_i$'s have no common non-trivial invariant set.

In the second part we study the skew product when $\Omega=\{0,1\}^\Z$, $\mu$ is a Bernoulli
measure, and $\Phi_0,\Phi_1$ are $\R$-extensions of a same uniquely ergodic
probability-preserving automorphism. We prove that, for a large class of roof functions,
the skew product is rationally ergodic with return sequence asymptotic to $\sqrt{n}$,
and its trajectories satisfy the central, functional central and local limit theorem.
\end{abstract}

\maketitle

\section{Introduction and statement of results}

Let $\T=\R/\Z$ and $\mathbb A=\T\times\R$. Of course, for any $\alpha\in\T$
the transformation $\Phi_0:(x,t)\in\mathbb A\mapsto(x+\alpha,t)$ is not ergodic
wrt the Lebesgue measure on $\mathbb A$.
Now let $\beta\in\T$, $\phi:\T\rightarrow\R$ a $L^1$-function with zero mean,
and $\Phi_1:(x,t)\in\mathbb A\mapsto(x+\beta,t+\phi(x))$.
$\Phi_1$ also preserves the Lebesgue measure on $\mathbb A$. There are clear
obstructions for its ergodicity, e.g. when the equation $\phi(x)=\psi(x+\beta)-\psi(\beta)$
has a solution $\psi$.

In this paper we study ergodic properties of random iterations of such
transformations. Because the invariant foliations of $\Phi_0$ and $\Phi_1$ are
different, it can happen that the {\it random dynamical system} is ergodic.
The theorem below gives, in terms of $\phi$, checkable conditions for ergodicity.
Given a probability space $(X,\nu)$, let $L^1_0(X,\nu)$ denote the set of $L^1$-integrable
functions $\phi:X\rightarrow\R$ with zero mean, and let the {\it essential image} of
$\phi\in L^1_0(X,\nu)$ be the set of $t\in\R$ for which $\phi^{-1}[t-\varepsilon,t+\varepsilon]$
has positive $\nu$-measure for any $\varepsilon>0$.

\begin{theorem}\label{main thm 1}
Let $\mu$ be a Bernoulli measure on $\{0,1\}^\Z$, let $T_0,T_1$ be probability-preserving
automorphisms of a non-atomic standard probability space $(X,\nu)$, with $T_0$ ergodic,
and let $\phi\in L^1_0(X,\nu)$. Then
$$
\begin{array}{rcrcl}
F&:&\{0,1\}^\Z\times X\times\R&\longrightarrow &\{0,1\}^\Z\times X\times\R\\
 & &(\omega,x,t)              &\longmapsto     &(\sigma\omega,T_{\omega_0}x,t+\omega_0\phi(x))
\end{array}
$$
is ergodic iff the closed subgroup generated by the essential image of $\phi$ is $\R$.
\end{theorem}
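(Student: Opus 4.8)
The plan is to deduce the theorem from the general ergodicity criterion announced in the abstract, applied to the infinite-measure fibre $Y=X\times\R$ equipped with $\nu\times\mathrm{Leb}$ and the automorphisms $\Phi_0(x,t)=(T_0x,t)$, $\Phi_1(x,t)=(T_1x,t+\phi(x))$, so that indeed $F(\omega,x,t)=(\sigma\omega,\Phi_{\omega_0}(x,t))$. Since that criterion presupposes conservativity, the first task is to prove $F$ conservative. I would view $F$ as the $\R$-extension of the base map $\overline F(\omega,x)=(\sigma\omega,T_{\omega_0}x)$ on the probability space $(\{0,1\}^\Z\times X,\mu\times\nu)$ by the cocycle $c(\omega,x)=\omega_0\phi(x)$. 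First, $\overline F$ is ergodic: it is conservative (being probability-preserving), and because $T_0$ is ergodic the pair $T_0,T_1$ has no common non-trivial invariant set, so the general criterion applied with fibre $Y=X$ (where conservativity is automatic) yields ergodicity of $\overline F$. Next, $c\in L^1$ and $\int c\,d(\mu\times\nu)=\mu(\omega_0=1)\int\phi\,d\nu=0$ since $\phi$ has zero mean. By Atkinson's recurrence theorem — a real $L^1$ cocycle of zero integral over an ergodic probability-preserving base gives a conservative skew product — it follows that $F$ is conservative.

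With conservativity established, the general criterion says that $F$ is ergodic if and only if $\Phi_0,\Phi_1$ have no common non-trivial invariant set in $X\times\R$. The theorem then reduces to the purely descriptive statement that such a set exists precisely when the closed subgroup generated by the essential image of $\phi$ is proper.

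To classify the common invariant sets, let $E\subseteq X\times\R$ be both $\Phi_0$- and $\Phi_1$-invariant. Since $\Phi_0=T_0\times\mathrm{id}$ and $T_0$ is ergodic, Fubini shows that for a.e.\ $t$ the slice $E_t=\{x:(x,t)\in E\}$ is $T_0$-invariant, hence null or conull; thus $E=X\times S$ modulo null sets for some measurable $S\subseteq\R$. The $\Phi_1$-invariance then reads $\mathbf 1_S(t+\phi(x))=\mathbf 1_S(t)$ for a.e.\ $(x,t)$, i.e.\ $\phi(x)$ lies a.e.\ in the group of essential periods $H(S)=\{a\in\R:\mathrm{Leb}((S+a)\triangle S)=0\}$. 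As $H(S)$ is a closed subgroup of $\R$, this is equivalent to $\mathrm{ess.im}(\phi)\subseteq H(S)$, hence to $\langle\mathrm{ess.im}(\phi)\rangle\subseteq H(S)$. If $\langle\mathrm{ess.im}(\phi)\rangle=\R$ then $H(S)=\R$, forcing $S$ to be null or conull and $E$ trivial. Conversely, if $\langle\mathrm{ess.im}(\phi)\rangle$ is a proper closed subgroup, hence $\{0\}$ or $r\Z$, a non-trivial $S$ with $\langle\mathrm{ess.im}(\phi)\rangle\subseteq H(S)$ is available — take $S=(-\infty,0)$ in the first case and $S=r\Z+[0,r/2)$ in the second — producing the non-trivial common invariant set $X\times S$. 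Note that this non-ergodic direction is in fact elementary: $\{0,1\}^\Z\times X\times S$ is then a non-trivial $F$-invariant set, so neither conservativity nor the general criterion is needed for it.

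I expect the main obstacle to be the conservativity step rather than the descriptive one. The Atkinson argument is clean, but it rests on the base being ergodic and on $\phi$ being genuinely integrable, so the delicate point is verifying the hypotheses honestly — in particular extracting ergodicity of $\overline F$ from the general criterion — and handling the infinite-measure fibre with care. By contrast, the classification of common invariant sets is routine once $T_0$ is ergodic; the only subtlety there is that $S$ may carry infinite Lebesgue measure, so ``non-trivial'' must be read as ``neither null nor conull'', and the closedness of $H(S)$ must be invoked through the local $L^1$-continuity of translation.
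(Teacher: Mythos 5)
Your proposal is correct and follows essentially the same route as the paper: the ergodic direction is obtained exactly as in Corollary \ref{main corollary} (ergodicity of the base $(\omega,x)\mapsto(\sigma\omega,T_{\omega_0}x)$ from Theorem \ref{main thm 3}, conservativity of $F$ from Atkinson's theorem, then Theorem \ref{main thm 3} again), and the classification of common invariant sets of $\Phi_0,\Phi_1$ via ergodicity of $T_0$ and the closedness of the group of essential periods is the paper's argument phrased with indicator functions instead of bounded invariant functions. The explicit non-trivial invariant sets in the non-ergodic direction also match the paper's.
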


Above and henceforth, we endow skew products with the product measure.
Theorem \ref{main thm 1} is consequence of a more general statement. Let
$\Omega\subset\{0,\ldots,k-1\}^\Z$.

\begin{theorem}\label{main thm 3}
Let $(\Omega,\mu)$ be a shift of finite type with a Markov probability, and
$\Phi_0,\ldots,\Phi_{k-1}$ measure-preserving automorphisms of a non-atomic
standard measure space $(Y,\nu)$. Assume that
$F:(\omega,y)\in\Omega\times Y\mapsto(\sigma\omega,\Phi_{\omega_0}(y))$ is conservative.
Then $F$ is ergodic iff $\Phi_0,\ldots,\Phi_{k-1}$ have no common non-trivial invariant set.
\end{theorem}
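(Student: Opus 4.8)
The plan is to dispatch the easy implication and then reduce the hard one to a single structural statement: \emph{every bounded $F$-invariant function is, modulo $(\mu\times\nu)$-null sets, a function of the fibre variable $y$ alone.} First, if $B\subset Y$ is a common invariant set with $\nu(B)>0$ and $\nu(Y\setminus B)>0$, then $\Omega\times B$ is $F$-invariant and non-trivial, so $F$ is not ergodic; this is the direction $(\Rightarrow)$ by contraposition. For $(\Leftarrow)$, I would assume $F$ conservative and non-ergodic, pick a non-trivial invariant set $A$, and aim to show that $\mathbf 1_A$ equals $\mathbf 1_B(y)$ for some $B\subset Y$; granting this, the invariance identity $\mathbf 1_B(y)=\mathbf 1_B(\Phi_i y)$ (valid for every symbol $i$ appearing with positive probability) makes $B$ a common invariant set, non-trivial since $(\mu\times\nu)(A)=\nu(B)$. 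Thus everything reduces to fibre-measurability of invariant functions.

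The engine for that reduction is Hopf's ratio ergodic theorem, available precisely because $F$ preserves the $\sigma$-finite measure $\mu\times\nu$ and is conservative: for $g\in L^1$ and $h\in L^1$ with $h>0$ one has $S_ng/S_nh\to E[g\mid\mathcal I]/E[h\mid\mathcal I]$ a.e., where $S_ng=\sum_{k=0}^{n-1}g\circ F^k$ and $\mathcal I$ is the $F$-invariant $\sigma$-algebra. The crucial observation is that $F^k(\omega,y)=(\sigma^k\omega,\ \Phi_{\omega_{k-1}}\cdots\Phi_{\omega_0}\,y)$, so the fibre cocycle sees only the coordinates $\omega_0,\dots,\omega_{k-1}$. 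Consequently, if $g$ depends on finitely many coordinates of $\omega$ and on $y$, then $g\circ F^k$ is $\mathcal B_{\ge 0}\otimes\mathcal B_Y$-measurable for all large $k$ (here $\mathcal B_{\ge 0}$ is generated by the non-negative coordinates), and the finitely many exceptional early terms wash out of the ratio because conservativity forces $S_nh\to\infty$ a.e. Hence $E[g\mid\mathcal I]/E[h\mid\mathcal I]$ is $\mathcal B_{\ge 0}\otimes\mathcal B_Y$-measurable, and by $L^1$-approximation this holds for every $g\in L^1$. Running the identical argument for $F^{-1}$, whose fibre cocycle $\Phi_{\omega_{-k}}^{-1}\cdots\Phi_{\omega_{-1}}^{-1}$ uses only the strictly negative coordinates, yields $\mathcal B_{\le -1}\otimes\mathcal B_Y$-measurability of the same limits.

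To finish, I would take a bounded invariant $\psi$ and a fibre function $c\in L^1(\nu)$ with $c>0$. Since $\psi$ is $\mathcal I$-measurable, $E[\psi c\mid\mathcal I]=\psi\,E[c\mid\mathcal I]$ with $E[c\mid\mathcal I]>0$ a.e., so choosing $h=c$ in both measurability statements shows that $\psi=E[\psi c\mid\mathcal I]/E[c\mid\mathcal I]$ is measurable with respect to both $\mathcal B_{\ge 0}\otimes\mathcal B_Y$ and $\mathcal B_{\le -1}\otimes\mathcal B_Y$. The non-negative and strictly negative coordinate blocks are disjoint; for a Bernoulli measure these two $\sigma$-algebras are independent, so their intersection is $\mu$-trivial in the base, i.e. equals $\{\emptyset,\Omega\}\otimes\mathcal B_Y$, and $\psi$ is fibre-measurable as required. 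For a Markov probability the same triviality of the intersection is exactly the K-property (triviality of the two-sided tail) of the base.

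The hard part is this coupling produced by the cocycle: because $S_k$ always involves $\omega_0$, measurability cannot be pushed past $\mathcal B_{\ge 0}$ by iteration, and a naive tail/martingale argument fails outright. Conservativity is what rescues the situation, through Hopf's theorem and the divergence $S_nh\to\infty$ that neutralises the cocycle's anchoring at coordinate $0$ inside the Cesàro ratio; the complementary backward average then supplies the negative coordinates, and only the combination of the two, together with triviality of the two-sided tail, forces genuine fibre-measurability. I expect the single delicate point to be verifying that last triviality for an arbitrary Markov probability rather than for a Bernoulli one, where it is simply independence — this is where the full K-property of the shift must be invoked.
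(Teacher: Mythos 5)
Your strategy is correct in outline but genuinely different from the paper's. Both arguments make the same initial reduction --- every bounded $F$-invariant function must be shown to depend on $y$ alone --- and both use conservativity essentially; after that they diverge. The paper slices the invariant set $A$ into fibres $A_y\subset\Omega$ and works with the hyperbolic structure of the shift: a Lebesgue differentiation theorem along s- and u-cylinders (via martingale convergence), conservativity plus Egorov to pull a typical point back to a set where the s/u-densities of $A_y$ are uniformly close to $1$, the ratio-preserving (bounded distortion) property of Markov measures to transport that estimate, and finally a saturation lemma saying that a subset of a cylinder saturated by both s-sets and u-sets is the whole cylinder. You instead run Hopf's ratio ergodic theorem forwards and backwards, exploiting that the fibre cocycle of $F^{k}$ sees only $\omega_0,\dots,\omega_{k-1}$ and that of $F^{-k}$ only $\omega_{-k},\dots,\omega_{-1}$, with conservativity entering through $S_nh\to\infty$ a.e.\ so that finitely many early terms wash out of the ratio; a bounded invariant $\psi$, realised as the constant ratio $S_n(\psi c)/S_nc$ for a strictly positive $c\in L^1(\nu)$, then becomes measurable with respect to the completions of both $\mathcal B_{\ge 0}\otimes\mathcal B_Y$ and $\mathcal B_{\le -1}\otimes\mathcal B_Y$. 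This is an appealing soft alternative: it dispenses with local product coordinates, distortion and differentiation, at the price of an $L^1$-approximation step for the Hopf limits (which does work, since $|L_{g,h}-L_{g_j,h}|\le L_{|g-g_j|,h}$ and $\int L_{|g-g_j|,h}\,h\,d(\mu\times\nu)\le\|g-g_j\|_{1}$) and of the triviality of the intersection of the two $\sigma$-algebras.

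That triviality is the one step you must actually prove, and ``this is exactly the K-property'' is not a proof: triviality of $\mathcal B_{\ge 0}\cap\mathcal B_{\le -1}$ (mod $\mu$-null sets, to which the product statement reduces by Fubini slicing in $y$) is not the same assertion as triviality of the remote past $\bigcap_n\mathcal B_{\le -n}$. The correct tool is the Markov property itself: past and future are conditionally independent given $\sigma(\omega_0)$, so a set $E$ lying mod null in both $\mathcal B_{\ge 0}$ and $\mathcal B_{\le -1}$ satisfies $\mathbb E[\chi_E\mid\omega_0]=\mathbb E[\chi_E\mid\omega_0]^2$, whence $E\in\sigma(\omega_0)$ mod null; conditioning on $\sigma(\omega_{-1})$ likewise gives $E\in\sigma(\omega_{-1})$ mod null, and writing $E$ as a union of states $J$ (resp.\ $I$) one finds $p_{ij}>0\Rightarrow(i\in I\Leftrightarrow j\in J)$, which forces $J$ trivial exactly when $P$ is irreducible \emph{and aperiodic}. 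For periodic $P$ the intersection is genuinely non-trivial (e.g.\ $[\omega_0=0]$ for the two-state chain with $p_{01}=p_{10}=1$), and the theorem's conclusion can then fail (take $\Phi_0=\Phi_1=\Phi$ ergodic with $\Phi^2$ non-ergodic). You are in good company here: the paper's own proof uses aperiodicity tacitly at the analogous stage, when it asserts a single $n$ with $\mu([\omega_0=\overline\omega_0]\cap[\omega_n=\tilde\omega_n])>0$ for all pairs of cylinders. With ``K-property'' replaced by this explicit Markov-property computation and primitivity of $P$ made a standing hypothesis, your argument closes.
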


\begin{corollary}\label{main corollary}
Let $(\Omega,\mu)$ be a shift of finite type with a Markov probability, and
$T_0,\ldots,T_{k-1}$ probability-preserving automorphisms of a non-atomic
standard probability space $(X,\nu)$ with no common non-trivial invariant sets.
Let $\phi_i\in L^1_0(X,\nu)$ and
$\Phi_i:(x,t)\in X\times\mathbb R\mapsto(T_i(x),t+\phi_i(x))$, $i=0,\ldots,k-1$.
Then the skew product $(\omega,x,t)\mapsto(\sigma\omega,\Phi_{\omega_0}(x,t))$ is ergodic iff
$\Phi_0,\ldots,\Phi_{k-1}$ have no common non-trivial invariant set.
\end{corollary}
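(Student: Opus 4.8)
The plan is to derive the corollary from Theorem~\ref{main thm 3} by taking $Y=X\times\R$ with the (infinite, non-atomic, standard) measure $\nu\times\mathrm{Leb}$, on which $\Phi_0,\dots,\Phi_{k-1}$ act as measure-preserving automorphisms and $F$ is precisely the skew product of that theorem. The forward implication needs nothing extra: a common non-trivial invariant set $A\subset X\times\R$ for the $\Phi_i$ produces the non-trivial $F$-invariant set $\Omega\times A$, so ergodicity of $F$ forces the $\Phi_i$ to share no such set. Since Theorem~\ref{main thm 3} already turns the ``no common invariant set'' hypothesis into ergodicity \emph{once conservativity is known}, the only thing left to prove is that $F$ is conservative under that hypothesis.

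For this I would first isolate the base dynamics $G:(\omega,x)\mapsto(\sigma\omega,T_{\omega_0}x)$ on the probability space $(\Omega\times X,\mu\times\nu)$, so that $F(\omega,x,t)=(G(\omega,x),\,t+\Psi(\omega,x))$ is the $\R$-extension of $G$ by the cocycle $\Psi(\omega,x)=\phi_{\omega_0}(x)$. Two elementary reductions apply: since each $\phi_i$ has zero mean, $\Psi\in L^1$ with $\int\Psi\,d(\mu\times\nu)=\sum_i\mu([i])\int_X\phi_i\,d\nu=0$; and since $B\times\R$ is $\Phi_i$-invariant whenever $B\subset X$ is $T_i$-invariant, the hypothesis forces $T_0,\dots,T_{k-1}$ to have no common non-trivial invariant set. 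Now I would apply Theorem~\ref{main thm 3} a first time in the finite-measure case $Y=X$: the base $G$ preserves a probability and is hence conservative by Poincar\'e recurrence, and the $T_i$ share no invariant set, so $G$ is ergodic. With an ergodic probability-preserving base and a zero-mean $L^1$ cocycle, Atkinson's recurrence theorem shows that $\Psi$ is recurrent over $G$, which is exactly the conservativity of the $\R$-extension $F$. A second application of Theorem~\ref{main thm 3}, now to $Y=X\times\R$, then gives that $F$ is ergodic, completing the converse.

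I expect the conservativity step to be the crux. It is not formal: it relies essentially on the zero-mean normalization of $\Psi$ and on the ergodicity of the random base $G$, and the most economical way to obtain the latter is the finite-measure instance of Theorem~\ref{main thm 3} itself. Two points warrant care in the write-up: that Atkinson's theorem is invoked in a version valid for an arbitrary ergodic probability-preserving base (not merely a rotation), and that ``recurrence of the cocycle'' is used in the sense that is equivalent to conservativity of the associated $\R$-extension. Note also that one should only assert conservativity under the no-common-invariant-set hypothesis, as the ergodicity of $G$—and hence the Atkinson argument—may fail once the $T_i$ do share an invariant set; this is harmless, since in that case $F$ is already seen to be non-ergodic directly.
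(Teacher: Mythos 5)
Your proposal is correct and follows essentially the same route as the paper: reduce to conservativity of $F$ via Theorem~\ref{main thm 3}, obtain ergodicity of the probability-preserving base $(\omega,x)\mapsto(\sigma\omega,T_{\omega_0}x)$ by a first application of that theorem, and then invoke Atkinson's theorem for the zero-mean cocycle $\phi_{\omega_0}(x)$ to conclude that the $\R$-extension is conservative. The only superfluous step is your derivation of the no-common-invariant-set property for the $T_i$'s, which is already a standing hypothesis of the corollary.
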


Theorem \ref{main thm 3} is related to a result of Kakutani \cite{kakutani1951random}.
Let $(S,\rho)$ be a probability space, $(\Omega,\mu)=(S^{\mathbb N},\rho^{\mathbb N})$, and
$\sigma:(\Omega,\mu)\rightarrow(\Omega,\mu)$ be the shift map. Kakutani proved that if $(Y,\nu)$ is a probability space
and $\{\Phi_s\}_{s\in S}$ is a measurable family of probability-preserving automorphisms of $(Y,\nu)$,
then $F:(\omega,y)\mapsto(\sigma\omega,\Phi_{\omega_0}(y))$ is ergodic iff $\{\Phi_s\}_{s\in S}$
have no common non-trivial invariant set. Observe that, in this case, $F$ is automatically conservative.

The first version of Kakutani's theorem for infinite measures appeared in a paper of Wo{\'s}
\cite{wos1982random}, also for Bernoulli systems of the form $(S^{\mathbb N},\rho^{\mathbb N})$.
Thus Theorem \ref{main thm 3} does not follow either from Kakutani's neither from Wo{\'s}' results.
We would like to thank David Sauzin for pointing us reference \cite{wos1982random}.
Indeed, he has a strong application of such result for the context of standard maps
\cite{sauzin2006ergodicity}.

Some classical theorems in ergodic theory are not valid for infinite measures. E.g. Birkhoff's
averages converge to zero almost surely, provided the transformation is conservative and ergodic.
This leads the following question: what is a candidate for Birkhoff-type theorem?
One attempt was made by Aaronson, who introduced the notion of {\it rational ergodicity}
(see \S\ref{subsection infinite ergodic theory} for the definition).
Given a function $f$, denote its Birkhoff sums by $S_nf$. Rationally ergodic maps possess a sort
of C\`esaro-averaged version of convergence in measure: there is a sequence $\{a_n\}_{n\ge 1}$ such that,
for every $L^1$-function $f$ and every sequence $\{n_k\}_{k\ge 1}$ of positive integers, there exists a
subsequence $\{n_{k_l}\}_{l\ge 1}$ such that $S_{n_{k_l}}f/a_{n_{k_l}}$ converges to $\int f$ almost everywhere.
This latter property is called {\it weak homogeneity} and the sequence $\{a_n\}_{n\ge 1}$ is called a
{\it return sequence}.

Many authors investigated ergodic transformations of $\mathbb A$
\cite{conze1980ergodicite,conze1976ergodicite,krygin1974examples,schmidt1976lectures,schmidt1978cylinder},
but few established rational ergodicity. Aaronson and Keane \cite{aaronson1982visitors} considered ``deterministic''
random walks driven by irrational rotations of $\mathbb T$, and showed that the associated skew product on $\mathbb A$
is rationally ergodic. In \cite{cirilo2011law} we constructed, for almost every $\alpha\in\mathbb R$,
skew products of the form $(x,t)\in\mathbb A\mapsto(x+\alpha,t+\phi(x))$ that are rationally ergodic
along a subsequence of iterates. Here we consider a special case of Theorem \ref{main thm 3} and prove
that the associated skew product is rationally ergodic.

\begin{theorem}\label{main thm 2}
Let $\Omega=\{0,1\}^\Z$, $\mu$ a Bernoulli measure on $\Omega$, $T$ a uniquely ergodic probability-preserving
automorphism of a non-atomic standard probability space $(X,\nu)$, and $\phi:X\rightarrow\mathbb R$ a
non-zero continuous function with
\begin{align}\label{assumption on phi}
\dfrac{1}{\sqrt{n}}\sum_{i=0}^{n-1}\phi(T^ix)\rightarrow 0\ \ \text{ uniformly in }x.
\end{align}
Then $(\omega,x,t)\mapsto(\sigma\omega,Tx,t+\omega_0\phi(x))$ is
rationally ergodic with return sequence $\sqrt{n}$, and its trajectories satisfy
central, functional central and local limit theorem.
\end{theorem}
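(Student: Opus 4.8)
The plan is to reduce the three limit theorems and the rational ergodicity of $F$ to a single object: the Birkhoff sums of the cocycle driving the third coordinate. Writing $F$ as the $\R$-extension of the probability-preserving base $\bar T=\sigma\times T$ on $(\Omega\times X,\mu\times\nu)$ by the cocycle $\tau(\omega,x)=\omega_0\,\phi(x)$, one computes $S_n\tau(\omega,x)=\sum_{k=0}^{n-1}\omega_k\,\phi(T^kx)$. The key structural remark is that, for $x$ fixed, the coordinates $\omega_k$ are i.i.d.\ Bernoulli, so $S_n\tau$ is a sum of independent, uniformly bounded, non-identically-distributed random variables with conditional mean $p\sum_{k<n}\phi(T^kx)$ and conditional variance $p(1-p)\sum_{k<n}\phi(T^kx)^2$, where $p=\mu(\omega_0=1)$. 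Since $T$ is uniquely ergodic, Birkhoff averages of the continuous functions $\phi$ and $\phi^2$ converge uniformly in $x$; together with hypothesis \eqref{assumption on phi} this yields, uniformly in $x$, $\tfrac{1}{\sqrt n}\,\mathbb E[S_n\tau\mid x]\to 0$ and $\tfrac1n\,\mathrm{Var}(S_n\tau\mid x)\to\sigma^2:=p(1-p)\int\phi^2\,d\nu$, with $\sigma^2>0$ because $\phi$ is continuous and non-zero.

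With these estimates the CLT and FCLT are the soft part. Conditionally on $x$ the Lindeberg condition is trivial (the summands are bounded by $\|\phi\|_\infty$ while the variance grows linearly), so the Lindeberg--Feller theorem gives $S_n\tau/\sqrt n\Rightarrow \mathcal N(0,\sigma^2)$ for every $x$; averaging the conditional characteristic functions over $x$ by dominated convergence gives the CLT under $\mu\times\nu$. For the functional statement I would run the same argument for the rescaled process $W_n(s)=S_{\lfloor ns\rfloor}\tau/\sqrt n$: unique ergodicity gives $\tfrac1n\sum_{k<\lfloor ns\rfloor}\phi(T^kx)^2\to s\,\sigma^2/(p(1-p))$ uniformly, the centering again vanishes by \eqref{assumption on phi}, and the functional Lindeberg--Feller invariance principle produces $W_n\Rightarrow\sigma B$ with $B$ a standard Brownian motion, first conditionally and then after integrating out $x$.

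The local limit theorem is the analytic core and, I expect, the main obstacle. Here I would work with the conditional characteristic function, which factorizes by independence: $\mathbb E[e^{i\xi S_n\tau}\mid x]=\prod_{k=0}^{n-1}\bigl(1-p+p\,e^{i\xi\phi(T^kx)}\bigr)$, and the elementary identity $\lvert 1-p+p e^{i\theta}\rvert^2=1-2p(1-p)(1-\cos\theta)\le e^{-2p(1-p)(1-\cos\theta)}$ gives $\bigl\lvert\mathbb E[e^{i\xi S_n\tau}\mid x]\bigr\rvert\le \exp\bigl(-p(1-p)\sum_{k<n}(1-\cos(\xi\phi(T^kx)))\bigr)$. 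A second-order expansion near $\xi=0$ reproduces the Gaussian factor $e^{-\sigma^2\xi^2/2}$, hence, after Fourier inversion, the constant $1/(\sqrt{2\pi}\,\sigma)$. For $\xi$ in a compact set bounded away from the origin, unique ergodicity makes $\tfrac1n\sum_{k<n}(1-\cos(\xi\phi(T^kx)))$ converge uniformly in $x$ to $\int(1-\cos(\xi\phi))\,d\nu>0$, producing uniform exponential decay of the characteristic function. The positivity of this integral for every $\xi\neq 0$ is precisely the non-lattice condition that the closed subgroup generated by the essential image of $\phi$ equals $\R$; this is also what Theorem \ref{main thm 1} requires, so the same hypothesis underlies both ergodicity and the LLT. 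The delicate points are the uniformity of the decay across the whole frequency range and the control of large $\lvert\xi\rvert$, where one must leverage continuity of $\phi$ and the product structure; once these are settled, Fourier inversion yields $\sqrt n\,\mathbb E[g(S_n\tau-u_n)]\to \tfrac1{\sqrt{2\pi}\,\sigma}\int g\,d\mathrm{Leb}$ for bounded $u_n=o(\sqrt n)$ and suitable test functions $g$.

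Finally, for rational ergodicity I would first record that $F$ is conservative and ergodic: the base $\bar T$ is ergodic (the Bernoulli factor is weakly mixing), $\int\tau\,d(\mu\times\nu)=p\int\phi\,d\nu=0$, so Atkinson's recurrence theorem gives conservativity, while ergodicity is exactly Theorem \ref{main thm 1} under the non-lattice condition above. Taking the reference set $A=\Omega\times X\times[0,1]$, one has $\mu\times\nu\times\mathrm{Leb}(A\cap F^{-k}A)=\mathbb E\bigl[(1-\lvert S_k\tau\rvert)^+\bigr]$, which by the LLT is asymptotic to $c/\sqrt k$; summing gives $\int_A S_n\mathbf 1_A\sim c'\sqrt n$, so the return sequence is of order $\sqrt n$. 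To promote weak homogeneity to genuine rational ergodicity I would verify Aaronson's Renyi (bounded-ratio) inequality $\int_A (S_n\mathbf 1_A)^2\le M\bigl(\int_A S_n\mathbf 1_A\bigr)^2$; expanding the square into triple overlaps $A\cap F^{-j}A\cap F^{-k}A$ and estimating each by a two-time local estimate for $(S_j\tau,\,S_k\tau-S_j\tau)$ bounds the second moment by the square of the first, with $M$ coming from the Gaussian kernel. This yields rational ergodicity with return sequence asymptotic to $\sqrt n$, and the CLT, FCLT and LLT proved above are the asserted limit theorems for the trajectories of $F$.
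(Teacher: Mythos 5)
Your plan follows essentially the same route as the paper: the CLT/FCLT are the soft part coming from the central limit theorem for the conditionally independent, uniformly bounded summands; the local limit theorem is proved by Fourier analysis of the factorized characteristic function, with unique ergodicity of $T$ supplying a Gaussian two-sided bound near $\xi=0$ and uniform exponential decay on $\delta\le|\xi|\le\Delta$, both uniform in $x$ (which is exactly the uniformity needed later); and rational ergodicity is obtained from the Renyi inequality for $\Omega\times X\times[\text{interval}]$, whose first moment is $\sim\sqrt n$ by the uniform LLT with moving target and whose second moment is controlled precisely by your two-time splitting $S_k\tau=S_j\tau+(S_k\tau-S_j\tau)$ together with independence of the two blocks. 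The two points you leave open are handled in the paper as follows: large frequencies never arise because $\chi_{[-1,1]}$ is sandwiched between explicit functions $g\le\chi_{[-1,1]}\le h$ whose Fourier transforms are continuous and supported in $[-\Delta,\Delta]$, so the inversion integral is over a compact frequency window only; and the non-degeneracy you identify as the non-lattice condition indeed enters the paper's argument too, in the form of the (not explicitly stated in the theorem) requirement that some interval $[\varepsilon,2\varepsilon]\subset\phi(X)$ have positive-measure preimage, so your observation that an extra hypothesis of this kind underlies both the LLT and the ergodicity of $F$ is apt.
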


Assumption (\ref{assumption on phi}) is natural for obtaining limit theorems, because the speed
of growth of $\sum_{i=0}^{n-1}\phi(T^ix)$ has to be lower than in simple random walks of $\Z$.
It holds e.g. when $\phi$ is a coboundary for $T$.

Let $F$ denote the skew product $(\omega,x,t)\mapsto(\sigma\omega,Tx,t+\omega_0\phi(x))$.
The third coordinate of $F^n$ is
\begin{align}\label{third coordinate of F^n}
t+\sum_{i=0}^{n-1}\omega_i\phi(T^ix)=t+\dfrac{1}{2}\sum_{i=0}^{n-1}\phi(T^ix)(2\omega_i-1)+\dfrac{1}{2}\sum_{i=0}^{n-1}\phi(T^ix).
\end{align}
Let $X_1,X_2,\ldots$ be independent identically distributed random variables, each with law
$\mathbb P[X_n=1]=\mathbb P[X_n=-1]=\frac{1}{2}$. For each $x\in X$, let $\{S_n^x\}_{n\ge 1}$ be
the martingale
\begin{align*}
S_n^x=\phi(x)\cdot X_0+\phi(Tx)\cdot X_1+\cdots+\phi(T^{n-1}x)\cdot X_{n-1}.
\end{align*}
Then (\ref{third coordinate of F^n}) equals $t+\frac{1}{2}(S_n^x+\sum_{i=0}^{n-1}\phi(T^ix))$.
Because $\phi$ satisfies (\ref{assumption on phi}) and $\{S_n^x\}_{n\ge 1}$ is a martingale with
bounded increments, the sequences $\sum_{i=0}^{n-1}\omega_i\phi(T^ix),n\ge 1$, satisfy
both the central and functional central limit theorem. Rational ergodicity does not follow
from these theorems. For that we need a local limit theorem.

\begin{theorem}\label{main thm 4}
Under the conditions of Theorem \ref{main thm 2}, let $\{s_n^x\}_{n\ge 1}\subset\mathbb R$ with
\begin{align*}
\lim_{n\rightarrow\infty}s_n^x/\sqrt{n}=0\ \ \text{uniformly in }x.
\end{align*}
Given $t>0$, there are $K,n_0>0$ such that
\begin{align*}
K^{-1}\le \sqrt{n}\cdot \mathbb P[S_n^x\in [-t,t]-s_n^x]\le K,\ \ \ \ \forall\,n>n_0,\forall\,x\in X.
\end{align*}
\end{theorem}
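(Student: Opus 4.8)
The plan is to prove a local central limit theorem for the martingale $S_n^x$, uniformly in $x$, with explicit Gaussian normalization, and to read off the two-sided bound from it. Write $V_n^x=\sum_{j=0}^{n-1}\phi(T^jx)^2$ for the variance of $S_n^x$ and $\sigma^2=\int_X\phi^2\,d\nu$. Since $\phi$ is continuous and $T$ is uniquely ergodic, $\tfrac1n V_n^x\to\sigma^2$ uniformly in $x$, and $\sigma^2>0$ because $\phi$ is non-zero and continuous; thus $V_n^x=n\sigma^2(1+o(1))$ uniformly. The increments $\phi(T^jx)X_j$ are independent, symmetric and bounded by $\|\phi\|_\infty$, so the characteristic function factorizes as $f_n^x(\xi)=\mathbb E[e^{i\xi S_n^x}]=\prod_{j=0}^{n-1}\cos\big(\xi\phi(T^jx)\big)$. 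I will show that $\sqrt n\,\mathbb P[S_n^x\in I_n]\to 2t/\sqrt{2\pi\sigma^2}$ uniformly in $x$, where $I_n=[-t,t]-s_n^x$; since this limit is a fixed positive number, the desired inequalities follow by taking $K$ slightly larger than $2t/\sqrt{2\pi\sigma^2}$ and $n_0$ large.

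To access the point probability I would pass through a Fourier smoothing step. Fix $\epsilon>0$ and choose continuous functions $h_-\le\mathbf{1}_{[-t,t]}\le h_+$ with Fourier transforms $\hat h_\pm$ supported in a fixed interval $[-A,A]$ and $\int h_\pm=2t\pm\epsilon$ (Fej\'er-type kernels). Then
\[
\mathbb E\big[h_-(S_n^x+s_n^x)\big]\le \mathbb P[S_n^x\in I_n]\le \mathbb E\big[h_+(S_n^x+s_n^x)\big],
\]
and by Parseval, using that $f_n^x$ is real and even,
\[
\mathbb E\big[h_\pm(S_n^x+s_n^x)\big]=\frac1{2\pi}\int_{-A}^{A}\hat h_\pm(\xi)\,e^{i s_n^x\xi}\,f_n^x(\xi)\,d\xi.
\]
Splitting this integral at $|\xi|=\eta$ and rescaling $\xi=u/\sqrt n$ on the central part, the expansion $\log\cos\theta=-\theta^2/2+O(\theta^4)$ together with $\tfrac1n V_n^x\to\sigma^2$ gives $f_n^x(u/\sqrt n)\to e^{-\sigma^2u^2/2}$ uniformly in $x$; since $s_n^x/\sqrt n\to0$ uniformly one has $e^{is_n^x u/\sqrt n}\to1$ and $\hat h_\pm(u/\sqrt n)\to\hat h_\pm(0)=\int h_\pm$. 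Dominated convergence then yields $\sqrt n\,\mathbb E[h_\pm(S_n^x+s_n^x)]\to (2t\pm\epsilon)/\sqrt{2\pi\sigma^2}$, and letting $\epsilon\to0$ produces the claimed limit, provided the outer part $\eta\le|\xi|\le A$ contributes $o(1/\sqrt n)$ uniformly in $x$.

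The main obstacle is exactly this last point: I must show $\sup_{\eta\le|\xi|\le A}|f_n^x(\xi)|\le e^{-cn}$ for some $c=c(\eta,A)>0$, uniformly in $x$. Here $\tfrac1n\log|f_n^x(\xi)|=\tfrac1n\sum_{j=0}^{n-1}\log\big|\cos(\xi\phi(T^jx))\big|$, whose integrand is unbounded, so unique ergodicity (which applies to continuous observables) cannot be invoked directly. I would circumvent this by truncating: for $M>0$ set $g_{\xi,M}=\max\{\log|\cos(\xi\phi(\cdot))|,-M\}$, which is continuous, apply unique ergodicity to $g_{\xi,M}$, and use $\log|\cos|\le g_{\xi,M}$ to bound $\tfrac1n\log|f_n^x(\xi)|\le \tfrac1n\sum_j g_{\xi,M}(T^jx)\to\int g_{\xi,M}\,d\nu$ uniformly in $x$. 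By monotone convergence $\int g_{\xi,M}\,d\nu\downarrow\int\log|\cos(\xi\phi)|\,d\nu=:-\rho(\xi)$ as $M\to\infty$. The decisive analytic input is that $\rho(\xi)>0$ for every $\xi\neq0$, i.e. that $\xi\phi\notin\pi\Z$ on a set of positive $\nu$-measure; this is the non-lattice (aperiodicity) hypothesis on $\phi$, and it is genuinely needed, since for a lattice-valued $\phi$ (for instance an integer-valued coboundary) the interval $I_n$ can avoid the support of $S_n^x$ and the lower bound fails for small $t$. Granting $\rho(\xi)>0$, joint continuity of $(\xi,y)\mapsto g_{\xi,M}(y)$ on $[\eta,A]\times X$ together with a Dini and compactness argument lets one choose $M$ and a single threshold $n_0$ making $\tfrac1n\log|f_n^x(\xi)|\le -c$ for all $\eta\le|\xi|\le A$, all $x$, and all $n\ge n_0$. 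This uniform exponential decay, which I expect to be the most delicate part since it demands simultaneous control of the singularities of $\log|\cos|$ and uniformity in both $x$ and $\xi$, discards the outer part and completes the local limit estimate.
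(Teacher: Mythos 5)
Your proposal is correct in outline and follows the same Fourier--analytic scheme as the paper's proof: sandwich $\chi_{[-t,t]}$ between band-limited functions, apply Fourier inversion, and split the frequency integral into a central window $|\xi|\le\delta$, controlled by a Gaussian estimate on $\varphi_{S_n^x}(\xi/\sqrt n)$ that is uniform in $x$ thanks to unique ergodicity applied to $\phi^2$, and an outer window $\delta\le|\xi|\le\Delta$ where the characteristic function must decay exponentially in $n$. (You aim for the sharp constant $2t/\sqrt{2\pi\sigma^2}$ rather than mere two-sided bounds; for your dominated-convergence step you should state explicitly the uniform majorant $|f_n^x(u/\sqrt n)|\le e^{-bu^2}$ on $|u|\le\delta\sqrt n$, which is exactly the paper's Lemma \ref{lemma estimate characteristic} and follows from the same $\log\cos$ expansion you invoke.) The genuine divergence is in the outer window. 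The paper selects an interval $[\varepsilon,2\varepsilon]\subset\phi(X)$ on which $|\cos(\xi s)|\le\rho<1$ for all $\delta<|\xi|<\Delta$, and uses unique ergodicity to bound below the frequency of visits of the orbit to $\phi^{-1}[\varepsilon,2\varepsilon]$; you instead truncate $\log|\cos(\xi\phi)|$ at level $-M$, apply unique ergodicity to the truncated continuous observable, and pass to the limit in $M$. Your route costs a little more analysis (the joint compactness/Dini argument in $(\xi,x)$) but is more robust, and it makes explicit the hypothesis it really needs, namely $\nu(\xi\phi\notin\pi\Z)>0$ for every $\xi\ne 0$. The paper's route needs the same thing and hides it in the unsupported assertion that $\phi(X)$ contains an interval $[\varepsilon,2\varepsilon]$ -- which can fail, e.g.\ for a locally constant integer-valued coboundary on a uniquely ergodic subshift, precisely the lattice situation in which, as you observe, the lower bound is false for small $t$. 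So your flagged aperiodicity condition is not a defect of your argument but a genuine missing hypothesis of the theorem as stated; with it granted, your proof goes through, and without it neither proof does.
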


Theorem \ref{main thm 4} is a uniform local limit theorem with moving targets, where the increments
are independent but not identically distributed. Its proof uses Fourier analysis. See e.g. \S 10.4
of \cite{breiman1968probability} for Fourier analytical proofs of limit theorems.

Now consider a special case of Theorem \ref{main thm 1}: let $T_0,T_1$ be irrational
rotations of $\mathbb T$. When $\phi$ has small variation, $\Phi_1$ is a
conservative perturbation of $\Phi_0$, a particular situation that naturally appears
in the phenomenon called {\it Arnold diffusion}. In \cite{sauzin2006examples}, the author
proposed that a small perturbation in the Gevrey category of a non-degenerate
integrable Hamiltonian system gives rise to a dynamics that can be reduced to a skew
product extension of integrable transformations of $\mathbb A$ over $\{0,1\}^\Z$,
and proved that the trajectories of the skew product satisfy the functional central
limit theorem.

Our results apply to a slight variation of the model proposed in
\cite{sauzin2006examples}, when the integrable transformations of $\mathbb A$ are
$\R$-extensions of rotations of $\mathbb T$, and we also obtain a uniform local
limit theorem with moving targets (Theorem \ref{main thm 4}), and that the skew
product is rationally ergodic (Theorem \ref{main thm 2}). We believe these results
can be extended to the case treated in \cite{sauzin2006examples}.

The paper is organized as follows. In \S\ref{section preliminaries} we establish
the necessary preliminaries. In \S\ref{section proof of thm 3} we prove Theorem
\ref{main thm 3} and Corollary \ref{main corollary}. Section \ref{section proof of thm 1}
encloses the first part of the paper, where we prove Theorem \ref{main thm 1}.
The second part consists of \S\S\ref{section proof of thm 4} and
\ref{section proof of thm 2}: in \S\ref{section proof of thm 4} we prove
Theorem \ref{main thm 4}, and in \S\ref{section proof of thm 2} we prove
Theorem \ref{main thm 2}.

\section{Notation and preliminaries}\label{section preliminaries}

\begin{definition}\label{def vinogradov}
Let $f,g:\N\rightarrow\R$. We write $f\lesssim g$ if there is $C>0$ such that
$$|f(n)|\le C\cdot |g(n)|\,,\ \ \forall\,n\in\N.$$
If $f\lesssim g$ and $g\lesssim f$, we write $f\sim g$.
\end{definition}

Given an irreducible stochastic matrix $P=(p_{ij})_{0\le i,j<k}$, $\Omega=\Omega(P)$
is the {\it shift of finite type} with transition matrix $P$:
\begin{align*}
\Omega=\left\{(\ldots,\omega_{-1},\omega_0,\omega_1,\ldots)\in\{0,\ldots,k-1\}^\Z:
p_{\omega_i,\omega_{i+1}}>0 \text{ for all }i\in\Z\right\}.
\end{align*}
$\omega=(\omega_n)_{n\in\Z}$ denotes an element of $\Omega$.
Let $\sigma:\Omega\rightarrow\Omega$ be the left shift, i.e. $(\sigma\omega)_n=\omega_{n+1}$.
Given $\overline\omega\in\Omega$, a {\it cylinder} containing $\overline\omega$ is a set of the form
\begin{align*}
[\omega_n=\overline\omega_n,\ldots,\omega_m=\overline\omega_m]=
\{\omega\in\Omega:\omega_n=\overline\omega_n,\ldots,\omega_m=\overline\omega_m\}.
\end{align*}
Given a probability vector $\pi=(\pi_0,\ldots,\pi_{k-1})$, let $\mu$ be the probability on
$\Omega$ defined as
\begin{align*}
\mu[\omega_n=\overline\omega_n,\ldots,\omega_m=\overline\omega_m]=
\pi_{\overline\omega_n}p_{\overline\omega_n,\overline\omega_{n+1}}\cdots p_{\overline\omega_{m-1},\overline\omega_m}.
\end{align*}
$\mu$ is called a {\it Markov probability}. Clearly, it is invariant under $\sigma$.

The proof of Theorem \ref{main thm 3} will use the hyperbolic structure of $\Omega$.
We now setup the tools that will be needed.

\subsection{s-sets and u-sets}\label{subsection stable and unstable sets}

Let $\overline\omega\in\Omega$.
A {\it s-set} is a set of the form
\begin{align*}
[w_i=\overline w_i,i\ge n]=\{\omega\in\Omega\,:\,\omega_i=\overline\omega_i\text{ for all }i\ge n\}
\end{align*}
and a {\it u-set} is a set of the form
\begin{align*}
[w_i=\overline w_i,i\le n]=\{\omega\in\Omega\,:\,\omega_i=\overline\omega_i\text{ for all }i\le n\}.
\end{align*}

A cylinder $[\omega_n=\overline\omega_n]$ can be seen either as a union of s-sets or of u-sets:
\begin{align*}
[\omega_n=\overline\omega_n]=\bigcup_{\tilde\omega\in[\omega_i=\overline\omega_i,i\le n]}[\omega_i=\tilde\omega_i,i\ge n]=
\bigcup_{\tilde\omega\in[\omega_i=\overline\omega_i,i\ge n]}[\omega_i=\tilde\omega_i,i\le n].
\end{align*}
Furthermore, for any $\tilde\omega\in[\omega_n=\overline\omega_n]$ the intersections
\begin{align*}
[\omega_i=\overline\omega_i,i\ge n]\cap[\omega_i=\tilde\omega_i,i\le n]\ \text{ and }\
[\omega_i=\tilde\omega_i,i\ge n]\cap[\omega_i=\overline\omega_i,i\le n]
\end{align*}
consist of single points $\langle\overline\omega,\tilde\omega\rangle$ and $\langle\tilde\omega,\overline\omega\rangle$.

\begin{figure}[hbt!]
\def\svgwidth{10cm}
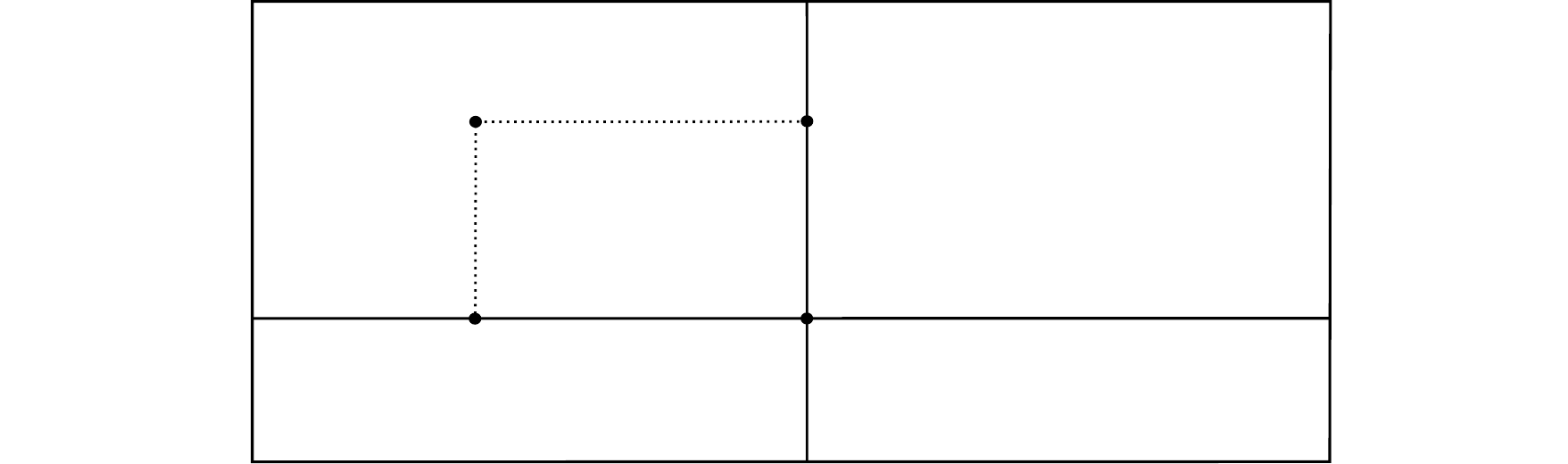
\caption{Local cylinder coordinates.}\label{picture 2}
\end{figure}

\noindent Thus the map
$$
\begin{array}{rcl}
[\omega_n=\overline\omega_n]&\longrightarrow &[\omega_i=\overline\omega_i,i\ge n]\times[\omega_i=\overline\omega_i,i\le n]\\
&&\\
\tilde\omega &\longmapsto&(\langle\overline\omega,\tilde\omega\rangle,\langle\tilde\omega,\overline\omega\rangle).
\end{array}
$$
is a bijection. Call it a {\it local cylinder coordinate} of $[\omega_n=\overline\omega_n]$.

Each s-set $[\omega_i=\overline\omega_i,i\ge n]$ is isomorphic to a one-sided symbolic space.
Its sigma-algebra is generated by the infinite cylinders of the form
\begin{align*}
[\omega_i=\tilde\omega_i,i\ge n-k],\ \text{ where }\tilde\omega\in[\omega_i=\overline\omega_i,i\ge n]\text{ and }k\ge 0.
\end{align*}
Call them {\it s-cylinders} of the s-set $[\omega_i=\overline\omega_i,i\ge n]$,
and $k$ the {\it length} of the s-cylinder. Of course, s-sets are s-cylinders of themselves,
and even more: a s-set is a s-cylinder of infinitely many s-sets.
Define {\it u-cylinders} in a similar way.

\subsection{s-measures and u-measures}\label{subsection s-measures}

Endow each s-set $[\omega_i=\overline\omega_i,i\ge n]$ with a {\it s-measure} $\mu^s$, defined
on its s-cylinders by
\begin{align*}
\mu^s[\omega_i=\tilde\omega_i,i\ge n-k]=\pi_{\tilde\omega_{n-k}}p_{\tilde\omega_{n-k},\tilde\omega_{n-k+1}}\cdots
p_{\tilde\omega_{n-1},\tilde\omega_n}.
\end{align*}
Similarly, define a {\it u-measure} $\mu^u$ on $[\omega_i=\overline\omega_i,i\le n]$ by
\begin{align*}
\mu^u[\omega_i=\tilde\omega_i,i\le n+k]=p_{\tilde\omega_n,\tilde\omega_{n+1}}p_{\tilde\omega_{n+1},\tilde\omega_{n+2}}
\cdots p_{\tilde\omega_{n+k-1},\tilde\omega_{n+k}}.
\end{align*}
$\mu^s$ and $\mu^u$ are one-sided Markov probabilities. A local cylinder coordinate
$[\omega_n=\overline\omega_n]\rightarrow[\omega_i=\overline\omega_i,i\ge n]\times[\omega_i=\overline\omega_i,i\le n]$
sends the restriction $\mu|_{[\omega_n=\overline\omega_n]}$ to the product measure $\mu^s\times\mu^u$.

One-sided Markov probabilities satisfy a {\it ratio preserving property}: if $A,B$ are subsets of
a cylinder of length $k$, then the quotient of the measures of their $k$-th iterates is preserved.
This is the content of the next lemma. Let $[\omega_i=\overline\omega_i,i\ge n-k]$ be a s-cylinder
of length $k$ of the s-set $[\omega_i=\overline\omega_i,i\ge n]$. Observe that
\begin{align*}
\sigma^{-k}[\omega_i=\overline\omega_i,i\ge n-k]=[\omega_i=\overline\omega_{i-k},i\ge n]
\end{align*}
is another s-set, and thus can be endowed with a s-measure $\mu^s$.

\begin{lemma}\label{lemma bounded distortion}
Let $[\omega_i=\overline\omega_i,i\ge n]$ be a s-set, and let $[\omega_i=\overline\omega_i,i\ge n-k]$
be a s-cylinder of length $k$. If $A,B\subset[\omega_i=\overline\omega_i,i\ge n-k]$, then
\begin{align}\label{equality bounded distortion}
\dfrac{\mu^s(\sigma^{-k}A)}{\mu^s(\sigma^{-k}B)}=\dfrac{\mu^s(A)}{\mu^s(B)}\,\cdot
\end{align}
Analogously, if $A,B$ are contained in a u-cylinder of length $k$ of a u-set, then
\begin{align*}
\dfrac{\mu^u(\sigma^kA)}{\mu^u(\sigma^kB)}=\dfrac{\mu^u(A)}{\mu^u(B)}\,\cdot
\end{align*}
\end{lemma}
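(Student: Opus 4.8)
The plan is to establish the stronger fact that $\sigma^{-k}$ scales the s-measure by a single constant. I would show that there is a number $c>0$, depending only on the coordinates $\overline\omega_{n-k},\ldots,\overline\omega_n$ that are frozen by the ambient s-cylinder, such that $\mu^s(\sigma^{-k}A)=c^{-1}\mu^s(A)$ for every measurable $A\subset[\omega_i=\overline\omega_i,i\ge n-k]$. Once this is in hand, the ratio identity (\ref{equality bounded distortion}) follows immediately by dividing the two instances for $A$ and $B$, since the constant $c^{-1}$ cancels.

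To prove the scaling identity, I would first verify it on s-cylinders and then extend by a measure-theoretic argument. Take a s-cylinder $A=[\omega_i=\tilde\omega_i,i\ge n-k-m]$ contained in $[\omega_i=\overline\omega_i,i\ge n-k]$, so that $\tilde\omega_i=\overline\omega_i$ for all $i\ge n-k$. Using the shift formula $\sigma^{-k}[\omega_i=\tilde\omega_i,i\ge n-k-m]=[\omega_i=\tilde\omega_{i-k},i\ge n-m]$, I would write out both $\mu^s(A)$ and $\mu^s(\sigma^{-k}A)$ straight from the definition of the s-measure. Both products begin with the same block $\pi_{\tilde\omega_{n-k-m}}p_{\tilde\omega_{n-k-m},\tilde\omega_{n-k-m+1}}\cdots p_{\tilde\omega_{n-k-1},\tilde\omega_{n-k}}$, whereas $\mu^s(A)$ carries the additional tail $p_{\tilde\omega_{n-k},\tilde\omega_{n-k+1}}\cdots p_{\tilde\omega_{n-1},\tilde\omega_n}$. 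Because $\tilde\omega_i=\overline\omega_i$ for $n-k\le i\le n$, this tail equals the constant $c:=p_{\overline\omega_{n-k},\overline\omega_{n-k+1}}\cdots p_{\overline\omega_{n-1},\overline\omega_n}$, which does not depend on $A$. Hence $\mu^s(A)=c\,\mu^s(\sigma^{-k}A)$ for all s-cylinders $A$.

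Next I would promote this to arbitrary measurable sets. The two set functions $A\mapsto\mu^s(\sigma^{-k}A)$ and $A\mapsto c^{-1}\mu^s(A)$ are finite measures on $[\omega_i=\overline\omega_i,i\ge n-k]$ that agree on the s-cylinders; the latter form a $\pi$-system generating the sigma-algebra of the s-set, so a standard Dynkin (monotone-class) argument forces the two measures to coincide everywhere. This yields the scaling identity, and dividing through gives (\ref{equality bounded distortion}). The u-measure statement is entirely symmetric, exchanging past and future coordinates and $\sigma^{-k}$ with $\sigma^k$, so I would merely point out the obvious substitutions rather than repeat the computation.

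I expect the only delicate point to be the index bookkeeping: tracking precisely which coordinates $\sigma^{-k}$ relabels, and identifying exactly which transition probabilities survive in the quotient. The conceptual heart of the argument is short once the shift formula for s-sets is available, namely that the surviving factor $c$ involves only the coordinates already fixed by the ambient cylinder and is therefore identical for $A$ and for $B$.
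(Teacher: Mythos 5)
Your proof is correct and follows essentially the same route as the paper's: the identity is verified on s-cylinders by writing out the defining products of transition probabilities and observing that the coordinates frozen by the ambient cylinder of length $k$ contribute a common factor $c=p_{\overline\omega_{n-k},\overline\omega_{n-k+1}}\cdots p_{\overline\omega_{n-1},\overline\omega_n}$. Your repackaging as the scaling identity $\mu^s(\sigma^{-k}A)=c^{-1}\mu^s(A)$, extended to all measurable sets by a $\pi$-system argument, is in fact slightly more careful than the paper's proof, which simply reduces to the case where $A$ and $B$ are s-cylinders without spelling out why the ratio identity then passes to general measurable subsets.
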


\begin{proof}
The sigma-algebra on $[\omega_i=\overline\omega_i,i\ge n-k]$ is generated by s-cylinders of length $\ge k$.
Thus we can assume $A$ and $B$ are both s-cylinders of length $\ge k$. Take
$\tilde\omega,\hat\omega\in[\omega_i=\overline\omega_i,i\ge n-k]$, take $l,m\ge k$, and let
\begin{align*}
A=[\omega_i=\tilde\omega_i,i\ge n-l]\ \text{ and }\ B=[\omega_i=\hat\omega_i,i\ge n-m].
\end{align*}
We have
\begin{align*}
\sigma^{-k}A=[\omega_i=\tilde\omega_{i-k},i\ge n-l+k]\ \text{ and }
\ \sigma^{-k}B=[\omega_i=\hat\omega_{i-k},i\ge n-m+k].
\end{align*}
As s-cylinders of $[\omega_i=\overline\omega_{i-k},i\ge n]$, the quotient of their $\mu^s$-measures is
\begin{eqnarray*}
\dfrac{\mu^s(\sigma^{-k}A)}{\mu^s(\sigma^{-k}B)}&=&
\dfrac{\pi_{\tilde\omega_{n-l}}p_{\tilde\omega_{n-l},\tilde\omega_{n-l+1}}\cdots
p_{\tilde\omega_{n-k-1},\tilde\omega_{n-k}}}{\pi_{\hat\omega_{n-m}}p_{\hat\omega_{n-m},\hat\omega_{n-m+1}}\cdots
p_{\hat\omega_{n-k-1},\hat\omega_{n-k}}}\\
&&\\
&=&\dfrac{\pi_{\tilde\omega_{n-l}}p_{\tilde\omega_{n-l},\tilde\omega_{n-l+1}}\cdots
p_{\tilde\omega_{n-1},\tilde\omega_{n}}}{\pi_{\hat\omega_{n-m}}p_{\hat\omega_{n-m},\hat\omega_{n-m+1}}\cdots
p_{\hat\omega_{n-1},\hat\omega_{n}}}\\
&&\\
&=&\dfrac{\mu^s(A)}{\mu^s(B)}\,,
\end{eqnarray*}
where in the second equality we used that $\tilde\omega_i=\hat\omega_i=\overline\omega_i$ for $n-k\le i\le n$.
The other statement is proved similarly.
\end{proof}

The above lemma constitutes the first of three properties of s-sets, u-sets, s-measures and
u-measures we will need. The second is that non-trivial subsets of cylinders cannot be simultaneously
saturated by s-sets and u-sets.

\begin{lemma}\label{saturation lemma}
Let $A\subset[\omega_n=\overline\omega_n]$ with positive $\mu$-measure. If for $\mu$-almost every
$\tilde\omega\in A$ both
\begin{align*}
[\omega_i=\tilde\omega_i,i\ge n]\text{ and }[\omega_i=\tilde\omega_i,i\le n]\subset A,
\end{align*}
then $A=[\omega_n=\overline\omega_n]$.
\end{lemma}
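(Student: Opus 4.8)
The plan is to transport the statement to the product picture furnished by the local cylinder coordinate of \S\ref{subsection stable and unstable sets} together with the product-measure identity of \S\ref{subsection s-measures}. Write $C=[\omega_n=\overline\omega_n]$, $S=[\omega_i=\overline\omega_i,i\ge n]$ and $U=[\omega_i=\overline\omega_i,i\le n]$, and recall that the local cylinder coordinate identifies $(C,\mu|_C)$ with $(S\times U,\mu^s\times\mu^u)$ in such a way that the s-set through a point becomes an $S$-fiber $S\times\{f\}$ (second coordinate fixed) and the u-set through a point becomes a $U$-fiber $\{p\}\times U$ (first coordinate fixed). Let $\widehat A\subset S\times U$ be the image of $A$; the goal is to show $\mu^s\times\mu^u(\widehat A)=\mu(C)$, which translates back to $A=C$ mod $0$.

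First I would record a fiberwise zero--one alternative. Let $\widehat N$ be the image of the null set of points of $A$ where saturation fails, so $\mu^s\times\mu^u(\widehat N)=0$ and every point of $\widehat G:=\widehat A\setminus\widehat N$ has both of its fibers contained in $\widehat A$. By Fubini, $\mu^s(\widehat N_f)=0$ for $\mu^u$-a.e.\ $f$, where $\widehat N_f=\{p:(p,f)\in\widehat N\}$; for such $f$, if the fiber $\widehat A_f:=\{p:(p,f)\in\widehat A\}$ has positive $\mu^s$-measure then $\widehat A_f\setminus\widehat N_f$ is nonnull, so it meets $\widehat G$ and s-saturation forces $\widehat A_f=S$. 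Thus $\widehat A_f\in\{\varnothing,S\}$ mod $0$ for $\mu^u$-a.e.\ $f$. Setting $Q=\{f:\mu^s(\widehat A_f)=\mu^s(S)\}$, which is measurable because $f\mapsto\mu^s(\widehat A_f)$ is (Fubini), and integrating the alternative yields $\widehat A=S\times Q$ mod $0$. The symmetric argument with u-sets produces a measurable $P\subset S$ with $\widehat A=P\times U$ mod $0$.

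The final step is to reconcile the two product descriptions. From $S\times Q=P\times U$ mod $0$ I would compute $(P\times U)\setminus(S\times Q)=P\times(U\setminus Q)$, so that $\mu^s(P)\,\mu^u(U\setminus Q)=0$. Since $A$ has positive measure, $\mu^s\times\mu^u(\widehat A)=\mu^s(P)\,\mu^u(U)>0$ forces $\mu^s(P)>0$, whence $\mu^u(U\setminus Q)=0$ and $Q$ is $\mu^u$-full. Therefore $\mu^s\times\mu^u(\widehat A)=\mu^s(S)\,\mu^u(Q)=\mu^s(S)\,\mu^u(U)=\mu(C)$, as desired.

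I expect the main obstacle to be exactly this last reconciliation: the two one-sided zero--one alternatives are not individually sufficient, and the content lies in observing that two product descriptions $S\times Q$ and $P\times U$ of the \emph{same} positive-measure set can agree only when one factor is $\mu$-full. The rest is bookkeeping with null sets, where the one point needing care is cutting the fibers \emph{exactly} into $\varnothing$ or the whole fiber before integrating; the only genuinely technical nuisance, the measurability of the fiber-measure functions, is dispatched by Fubini on the standard product space $(S\times U,\mu^s\times\mu^u)$.
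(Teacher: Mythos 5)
Your proof is correct and follows essentially the same route as the paper: pass to the local cylinder coordinates, use the two saturation hypotheses to write the image of $A$ as a product set $S\times Q$ and as a product set $P\times U$, and conclude that it must be the full product. You simply make explicit the mod-$0$ bookkeeping and the final reconciliation step that the paper dismisses as clear.
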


\begin{proof}
Let $A'$ be the image of $A$ under the local cylinder coordinates
$[\omega_n=\overline\omega_n]\rightarrow[\omega_i=\overline\omega_i,i\ge n]\times[\omega_i=\overline\omega_i,i\le n]$.
Because $[\omega_i=\tilde\omega_i,i\ge n]\subset A$
for $\mu$-almost every $\tilde\omega\in A$, $A'$ is a product set of the form
$[\omega_i=\overline\omega_i,i\ge n]\times U$.
Because $[\omega_i=\tilde\omega_i,i\le n]\subset A$ for $\mu$-almost every $\tilde\omega\in A$,
$A'$ is also a product set of the form $S\times [\omega_i=\overline\omega_i,i\le n]$.
This clearly implies that
$A'=[\omega_i=\overline\omega_i,i\ge n]\times[\omega_i=\overline\omega_i,i\le n]$,
and then $A=[\omega_n=\overline\omega_n]$.
\end{proof}

The third property is a Lebesgue differentiation theorem.

\begin{lemma}\label{differentiation theorem}
Let $A\subset[\omega_n=\overline\omega_n]$. Then for $\mu$-almost every $\tilde\omega\in A$
\begin{align*}
\lim_{k\to\infty}\dfrac{\mu^s(A\cap[\omega_i=\tilde\omega_i,i\ge n-k])}{\mu^s[\omega_i=\tilde\omega_i,i\ge n-k]}=
\lim_{k\to\infty}\dfrac{\mu^u(A\cap[\omega_i=\tilde\omega_i,i\le n+k])}{\mu^u[\omega_i=\tilde\omega_i,i\le n+k]}=1.
\end{align*}

\end{lemma}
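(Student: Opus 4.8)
The plan is to read each of the two quotients as a conditional expectation along a refining sequence of partitions and to apply the increasing martingale convergence theorem (L\'evy's theorem). I will prove the statement for the s-quotient; the u-quotient is entirely symmetric, exchanging the roles of $\sigma$ and $\sigma^{-1}$.

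First I would pass to the local cylinder coordinate of \S\ref{subsection stable and unstable sets}, identifying $[\omega_n=\overline\omega_n]$ with $\Sigma^s\times\Sigma^u$, where $\Sigma^s=[\omega_i=\overline\omega_i,i\ge n]$ and $\Sigma^u=[\omega_i=\overline\omega_i,i\le n]$, so that $\mu$ restricted to the cylinder becomes $\mu^s\times\mu^u$. The key preliminary observation is that $\mu^s$ of an s-cylinder $[\omega_i=\tilde\omega_i,i\ge n-k]$ depends only on the coordinates $\tilde\omega_{n-k},\ldots,\tilde\omega_n$, hence not on the future coordinates $\tilde\omega_i$, $i>n$; consequently the s-measure is \emph{the same} measure on every s-set foliating the cylinder, and under the local coordinate it is a fixed measure on the factor $\Sigma^s$ (and similarly $\mu^u$ on $\Sigma^u$). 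Writing a point as $(a,b)\in\Sigma^s\times\Sigma^u$ and letting $A_b=\{a\in\Sigma^s:(a,b)\in A\}$ be the slice of $A$, one checks that the s-cylinder of length $k$ through $\tilde\omega=(a,b)$ intersects $A$ in $(A_b\cap C_k(a))\times\{b\}$, where $C_k(a)\subset\Sigma^s$ is the cylinder of pasts agreeing with $a$ on the coordinates $n-k\le i<n$. Thus the s-quotient equals $\mu^s(A_b\cap C_k(a))/\mu^s(C_k(a))$, a quantity depending only on the slice.

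Next, let $\mathcal F_k$ be the finite sub-sigma-algebra of $\Sigma^s$ generated by the s-cylinders of length $k$. As $k$ grows these partitions refine, their atoms shrink to single points, and $\bigcup_k\mathcal F_k$ generates the full Borel sigma-algebra of $\Sigma^s$. By construction the s-quotient is exactly the conditional expectation $\mathbb E_{\mu^s}[\mathbf 1_{A_b}\mid\mathcal F_k](a)$, so the increasing martingale convergence theorem gives
$$
\lim_{k\to\infty}\frac{\mu^s(A_b\cap C_k(a))}{\mu^s(C_k(a))}=\mathbf 1_{A_b}(a)\quad\text{for }\mu^s\text{-a.e. }a\in\Sigma^s,
$$
and in particular the limit is $1$ for $\mu^s$-a.e. $a\in A_b$.

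Finally I would lift this slicewise statement to the cylinder by Fubini. Let $N^s\subset A$ be the set of $(a,b)$ at which the s-quotient fails to converge to $1$; by the previous step each slice $N^s_b$ is $\mu^s$-null, so $(\mu^s\times\mu^u)(N^s)=\mu(N^s)=0$. The symmetric argument yields a $\mu$-null set $N^u$ outside of which the u-quotient converges to $1$, and for every $\tilde\omega\in A\setminus(N^s\cup N^u)$ both limits equal $1$, as claimed. I expect the only delicate point to be the measure-theoretic bookkeeping behind Fubini, namely verifying that the s-measures on the various s-sets genuinely assemble into a single factor $\mu^s$ on $\Sigma^s$ (so that the bad set splits into null slices) and that the generating, refining nature of the $\mathcal F_k$ forces the martingale limit to be $\mathbf 1_{A_b}$ rather than a nontrivial conditional expectation; everything else is a routine application of L\'evy's theorem.
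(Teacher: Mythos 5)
Your proof is correct and follows essentially the same route as the paper: both read the density quotient as a conditional expectation along the refining filtration of s-cylinders (resp.\ u-cylinders), apply the increasing martingale convergence theorem to get $\chi_A$ as the a.e.\ limit on each s-set, and then pass from the slicewise statement to the full cylinder via the product structure $\mu|_{[\omega_n=\overline\omega_n]}\cong\mu^s\times\mu^u$ of the local cylinder coordinates. Your write-up merely makes the Fubini step and the identification of the s-measures across slices more explicit than the paper does.
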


\begin{proof}
Fix a s-set $[\omega_i=\overline\omega_i,i\ge n]$, and let
\begin{align*}
\mathcal P_k=\{[\omega_i=\tilde\omega_i,i\ge n-k]:\tilde\omega\in[\omega_i=\overline\omega_i,i\ge n]\}
\end{align*}
be its partition into s-cylinder of length $k$. $\bigvee_{k\ge 0}\mathcal P_k$ equals
the sigma-algebra on $[\omega_i=\overline\omega_i,i\ge n]$.
For each $k\ge 0$, let $\mathcal F_k$ be the sigma-algebra generated by $\mathcal P_k$.
$\{\mathcal F_k\}_{k\ge 0}$ is a filtration on $[\omega_i=\overline\omega_i,i\ge n]$.

For $\tilde\omega\in[\omega_i=\overline\omega_i,i\ge n]$,
$\alpha_k(\tilde\omega)=[\omega_i=\tilde\omega_i,i\ge n-k]$ is
the element of $\mathcal P_k$ containing $\tilde\omega$. For any measurable bounded function
$f:[\omega_i=\overline\omega_i,i\ge n]\rightarrow\mathbb R$, the sequence of functions
$\{\mathbb E[f|\mathcal F_k]\}_{k\ge 0}$ converges pointwise $\mu^s$-almost surely to $f$,
by the martingale convergence theorem. When $f=\chi_A$,
\begin{align*}
\mathbb E[f|\mathcal F_k](\tilde\omega)=\dfrac{1}{\mu^s(\alpha_k(\tilde\omega))}\int_{\alpha_k(\tilde\omega)}fd\mu^s=
\dfrac{\mu^s(A\cap[\omega_i=\tilde\omega_i,i\ge n-k])}{\mu^s[\omega_i=\tilde\omega_i,i\ge n-k]}
\end{align*}
and so
\begin{align}\label{lebesgue diff for s-measure}
\lim_{k\to\infty}\dfrac{\mu^s(A\cap[\omega_i=\tilde\omega_i,i\ge n-k])}{\mu^s[\omega_i=\tilde\omega_i,i\ge n-k]}=\chi_A(\tilde\omega)
\end{align}
for $\mu^s$-almost every $\tilde\omega\in[\omega_i=\overline\omega_i,i\ge n]$.

By a similar argument,
\begin{align}\label{lebesgue diff for u-measure}
\lim_{k\to\infty}\dfrac{\mu^u(A\cap[\omega_i=\tilde\omega_i,i\le n+k])}{\mu^u[\omega_i=\tilde\omega_i,i\le n+k]}=\chi_A(\tilde\omega)
\end{align}
for $\mu^u$-almost every $\tilde\omega\in[\omega_i=\overline\omega_i,i\le n]$.
Because the local cylinder coordinates send $\mu|_{[\omega_n=\overline\omega_n]}$ to $\mu^s\times\mu^u$,
relations (\ref{lebesgue diff for s-measure}) and (\ref{lebesgue diff for u-measure}) give the result.
\end{proof}

\subsection{Infinite ergodic theory}\label{subsection infinite ergodic theory}

Let $\Phi$ be an ergodic measure-preserving automorphism of a non-atomic standard
measure space $(Y,\nu)$. Assume that $\Phi$ is {\it conservative}: $\nu(A)=0$ for
any measurable $A\subset Y$ such that $\{\Phi^{-n}A\}_{n\ge 0}$ are pairwise disjoint.

As stated in the introduction, for every $f\in L^1(Y,\nu)$ the Birkhoff
averages $S_nf(y)/n$ converge to zero $\nu$-almost everywhere.
Nevertheless, Hopf's ratio ergodic theorem is an indication that some sort of
regularity might exist and it might still be possible, for a specific sequence $\{a_n\}_{n\ge 1}$,
to smooth out the fluctuations of $S_nf/a_n$ by means of a summability method.

One attempt to obtain this was made by Aaronson, who introduced the
notion of rational ergodicity (see \S 3.3 of \cite{aaronson1997introduction}).
Given a measurable set $A\subset Y$, let $R_n:A\rightarrow\N$ be the return
function of $A$ with respect to $\Phi$:
\begin{align*}
R_n(y)=\#\{1\le i\le n:\Phi^i(y)\in A\}.
\end{align*}

\begin{definition}\label{def rationally ergodic}
A conservative ergodic measure-preserving automorphism $\Phi$ of a non-atomic standard
measure space $(Y,\nu)$ is called {\it rationally ergodic} if there is a measurable set
$A\subset Y$ with $0<\nu(A)<\infty$ such that the return function $R_n:A\rightarrow\N$
satisfies a {\it Renyi inequality}:
\begin{align*}
\int_A R_n^2d\nu\ \lesssim\ \left(\int_A R_nd\nu\right)^2.
\end{align*}
\end{definition}

Aaronson \cite{aaronson1977ergodic} (see also Theorem 3.3.1 of \cite{aaronson1997introduction})
proved that every rationally ergodic automorphism is {\it weakly homogeneous}: if
$\{a_n\}_{n\ge 1}$ is defined by
\begin{align}\label{return sequence}
a_{n}=\dfrac{1}{\nu(A)^2}\int_A R_nd\nu=\dfrac{1}{\nu(A)^2}\sum_{i=1}^{n}\nu\left(A\cap\Phi^{-i}A\right),
\end{align}
then every sequence $\{n_k\}_{k\ge 1}$ of positive integers can be refined to a
subsequence $\{n_{k_l}\}_{l\ge 0}$ such that for all $f\in L^1(Y,\nu)$ it holds
\begin{align*}
\dfrac{1}{N}\sum_{l=1}^N\dfrac{1}{a_{n_{k_l}}}S_{n_{k_l}}f(y)\longrightarrow\ \int_Y fd\nu\ \ \text{a.e.}
\end{align*}
$\{a_n\}_{n\ge 1}$ is called a {\it return sequence} of $\Phi$ and it is unique up to asymptotic equality.

We conclude these preliminaries stating a result that will be used in the next section.

\begin{theorem}[Atkinson \cite{atkinson1976recurrence}]\label{theorem atkinson}
Let $T$ be an ergodic probability-preserving automorphism of a non-atomic
standard probability space $(X,\nu)$, and let $\phi\in L^1_0(X,\nu)$.
Then $\nu$-almost every $x\in X$ has the following property: for any measurable
set $A\subset X$ containing $x$ with $\nu(A)>0$ and any $\varepsilon>0$, the set
$$\left\{n\ge 1:T^nx\in A\text{ and }|S_n\phi(x)|<\varepsilon\right\}$$
is infinite.
\end{theorem}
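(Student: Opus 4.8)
The plan is to recognize the statement as the assertion that the $\R$-extension $S:=T_\phi$, defined by $S(x,s)=(Tx,s+\phi(x))$ on the $\sigma$-finite space $(X\times\R,m)$ with $m=\nu\times\mathrm{Leb}$, is \emph{conservative}, and then to extract the pointwise recurrence property from Hopf's recurrence theorem. The map $S$ preserves $m$ since $T$ preserves $\nu$ and vertical translations preserve Lebesgue measure, and its iterates are $S^n(x,s)=(T^nx,s+S_n\phi(x))$, so the vertical displacement along an orbit is precisely the Birkhoff sum $S_n\phi(x)$. Thus returns of the orbit of $(x,s)$ to a box $A\times(s-\varepsilon,s+\varepsilon)$ correspond exactly to the events $T^nx\in A$ and $|S_n\phi(x)|<\varepsilon$.

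First I would establish the dichotomy that $S$ is either conservative or totally dissipative. The vertical translations $\tau_c(x,s)=(x,s+c)$ commute with $S$ and preserve $m$, so they fix the canonical Hopf decomposition $X\times\R=C\sqcup D$ into conservative and dissipative parts; hence $C$ and $D$ are $\tau_c$-invariant for every $c$, i.e. of the form $C_0\times\R$ and $D_0\times\R$. Since $C$ and $D$ are $S$-invariant and $S$ projects to $T$ on the first coordinate, $C_0$ and $D_0$ are $T$-invariant, and ergodicity of $T$ forces one of them to be null.

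Next, and this is where the hypothesis $\int\phi\,d\nu=0$ enters, I would rule out total dissipativity. If $S$ were totally dissipative then, applying Hopf's criterion to the finite-measure set $X\times(-R,R)$, $m$-a.e. orbit would visit it only finitely often; a Fubini argument, letting the height vary over a dense set, then yields $|S_n\phi(x)|\to\infty$ for $\nu$-a.e. $x$. I expect this to be the main obstacle, because the zero-mean hypothesis only gives $S_n\phi(x)/n\to0$, which does not by itself forbid a sublinear escape to infinity. To close the gap I would use that $T$ preserves $\nu$, so $\int_X S_n\phi\,d\nu=n\int_X\phi\,d\nu=0$ and hence $\int_X(S_n\phi)^+\,d\nu=\int_X(S_n\phi)^-\,d\nu$ for every $n$; after reducing to bounded $\phi$ (so that the bounded increments $\phi(T^nx)$ make the sign of $S_n\phi$ eventually constant on a $T$-invariant, hence conull, set), the a.e. divergence $|S_n\phi|\to\infty$ forces one of $\int_X(S_n\phi)^+$, $\int_X(S_n\phi)^-$ to blow up while the other integrand tends to $0$ pointwise, contradicting their equality via an escape-of-mass estimate. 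Controlling this escaping mass is exactly the delicate point that makes Atkinson's theorem nontrivial, and is where a maximal inequality or a Kac-type counting of first returns is required.

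Finally, I would deduce the stated property from conservativity. Fixing a countable generating family $\{A_j\}$ of positive-measure subsets of $X$ and rationals $\varepsilon=1/k$, I apply Hopf's recurrence theorem to each box $A_j\times(-1/k,1/k)$: $m$-a.e. point returns infinitely often, which, since the set of return times depends only on $x$, translates by Fubini into the statement that for $\nu$-a.e. $x\in A_j$ the set $\{n\ge1:T^nx\in A_j,\ |S_n\phi(x)|<2/k\}$ is infinite. Intersecting these conull sets over all $j,k$ produces the full-measure set of good $x$, and the passage to an arbitrary measurable $A\ni x$ is a routine approximation using the generating family. This leaves the conservativity of $S$ as the single substantive ingredient.
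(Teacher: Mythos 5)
The paper does not actually prove this statement: it is imported verbatim from Atkinson's paper and used as a black box, so there is no in-house proof to compare with, and I assess your argument on its own. Your framing is the standard one and is sound as far as it goes: the theorem is (essentially) the conservativity of the $\R$-extension $(x,s)\mapsto(Tx,s+\phi(x))$; the conservative/dissipative dichotomy via the commuting vertical translations and the ergodicity of $T$ is correct; and the concluding Hopf-plus-Fubini step does recover the recurrence property in the form ``for each fixed $A$ and $\varepsilon$, for a.e.\ $x\in A$'', which is the form the paper actually uses. (As a side remark, the literal quantifier order ``for a.e.\ $x$: for \emph{every} $A\ni x$'' cannot hold as stated -- for any aperiodic $x$ take $A=X\setminus\{T^nx:n\ge1\}$, a conull set containing $x$ that the forward orbit never revisits -- so no proof of the literal statement should be expected.)

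The genuine gap is in the one substantive step, ruling out total dissipativity, and the route you sketch cannot be completed as described. After your sign argument you have $S_n\phi\to+\infty$ a.e.\ (say), hence $\nu(S_n\phi<0)\to0$ and $\int(S_n\phi)^-\,d\nu=o(n)$, while $\int(S_n\phi)^+\,d\nu=\int(S_n\phi)^-\,d\nu\to\infty$. These facts are mutually consistent -- both integrals could grow like $\sqrt{n}$ -- so no contradiction with $\int S_n\phi\,d\nu=0$ or with $S_n\phi=o(n)$ follows; the ``escape-of-mass estimate'' you defer is not a technical refinement but the entire content of the theorem, and it is not obtainable along these lines. (The reduction to bounded $\phi$ is also unjustified: recurrence of a truncated, recentered cocycle does not transfer back to the original one, and for unbounded $\phi$ the eventual-constant-sign step fails.) The mechanism that actually closes the argument is different: if there were $W$ with $\nu(W)>0$ and $\varepsilon>0$ such that $|S_n\phi(x)|\ge\varepsilon$ whenever $x\in W$ and $T^nx\in W$, then the cocycle identity $S_{n_j}\phi(x)-S_{n_i}\phi(x)=S_{n_j-n_i}\phi(T^{n_i}x)$ shows that the values of $S_n\phi(x)$ at successive return times to $W$ are pairwise $\varepsilon$-separated, so after $k$ returns some partial sum exceeds $(k-1)\varepsilon/2$ in modulus; Birkhoff gives about $n\nu(W)$ returns by time $n$, forcing $\max_{m\le n}|S_m\phi(x)|\gtrsim\varepsilon\nu(W)\,n$, which contradicts $S_n\phi(x)=o(n)$ (Birkhoff again, using $\int\phi\,d\nu=0$). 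Without this, or an equivalent maximal/return-time argument, your proof does not close.
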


In other words, the $\R$-extension $(x,t)\mapsto(Tx,t+\phi(x))$ is conservative.

\section{Kakutani's theorem: proof of Theorem \ref{main thm 3}}\label{section proof of thm 3}

Call $\{\Phi_0,\ldots,\Phi_{k-1}\}$ an {\it ergodic system} if $\Phi_0,\ldots,\Phi_{k-1}$ have no
common non-trivial invariant set: every measurable set $A\subset Y$ such that
\begin{align*}
A=\Phi_0^{-1}A=\Phi_1^{-1}A=\cdots=\Phi_{k-1}^{-1}A
\end{align*}
either has zero or full $\nu$-measure. Alternatively, any $g\in L^\infty(Y,\nu)$ such that
$g\circ \Phi_0=\cdots=g\circ \Phi_{k-1}=g$ is constant almost everywhere.

Here we assume the skew product
$$
\begin{array}{rcrcl}
F&:&\Omega\times Y&\longrightarrow &\Omega\times Y\\
 & &(\omega,y)        &\longmapsto     &(\sigma\omega,\Phi_{\omega_0}(y))
\end{array}
$$
is conservative and we want to prove that $F$ is ergodic if and only if
$\{\Phi_0,\ldots,\Phi_{k-1}\}$ is an ergodic system. Clearly, if $F$ is ergodic then also is
$\{\Phi_0,\ldots,\Phi_{k-1}\}$. For instance, if $g(y)$ is invariant simultaneously for
$\Phi_0,\ldots,\Phi_{k-1}$, then $f(\omega,y)=g(y)$ is $F$-invariant.

We claim the converse is equivalent to prove that any bounded $F$-invariant function $f(\omega,y)$
does not depend on the first coordinate, i.e. there is a bounded function $g(y)$ such that
\begin{align}\label{independence on first coordinate}
f(\omega,y)=g(y)\ \ \text{\rm a.e.}
\end{align}
Indeed, if we assume this and let $f(\omega,x,t)=g(x,t)$ be $F$-invariant, then
whenever $\omega_0=i$ we get
\begin{align*}
(g\circ\Phi_i)(y)=f(\sigma\omega,\Phi_i(y))=(f\circ F)(\omega,y)=f(\omega,y)=g(y)
\end{align*}
and so $g$ is $\Phi_i$-invariant. By assumption, $g$ is constant almost everywhere
and thus also is $f$.

Fix a set $A\subset \Omega\times Y$ of positive measure, invariant
under $F$. In terms of characteristic functions, condition
(\ref{independence on first coordinate}) translates to saying that $A=\Omega\times B$ for
some $B\subset Y$. Alternatively, we define $A_y\subset\Omega$ by
\begin{align*}
A=\bigcup_{y\in Y}A_y\times\{y\}
\end{align*}
and want to prove that $A_y=\Omega$ for almost every $(\omega,y)\in A$.
We prove this using the tools developed in \S\ref{section preliminaries}.

\begin{lemma}\label{lemma saturation of cylinders}
If $\mu(A_y\cap[\omega_n=\overline\omega_n])>0$, then $[\omega_n=\overline\omega_n]\subset A_y$.
\end{lemma}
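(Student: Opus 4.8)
The plan is to deduce the lemma from the saturation lemma (Lemma~\ref{saturation lemma}). Writing the cylinder in the local coordinates of \S\ref{subsection stable and unstable sets}, $[\omega_n=\overline\omega_n]\cong\Sigma^s\times\Sigma^u$ with $\Sigma^s=[\omega_i=\overline\omega_i,i\ge n]$, $\Sigma^u=[\omega_i=\overline\omega_i,i\le n]$ and $\mu|_{[\omega_n=\overline\omega_n]}=\mu^s\times\mu^u$, a u-set is a slice $\{s\}\times\Sigma^u$ and an s-set a slice $\Sigma^s\times\{u\}$. Hence, by Lemma~\ref{saturation lemma}, it is enough to show that for $\mu$-a.e.\ $\tilde\omega\in A_y\cap[\omega_n=\overline\omega_n]$ the whole s-set and the whole u-set through $\tilde\omega$ lie in $A_y$; the positive-measure hypothesis then forces $A_y\cap[\omega_n=\overline\omega_n]$ to be the full cylinder. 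Splitting $[\omega_n=\overline\omega_n]$ into finitely many sub-cylinders if necessary, I may assume $n\in\{-1,0\}$, which is what makes the freezing below exact.

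First I would establish a ``freezing'' of the fibre action along short cylinders. Since $F^k(\omega,y)=(\sigma^k\omega,\Phi_{\omega_{k-1}}\cdots\Phi_{\omega_0}(y))$ depends on $Y$ only through $\omega_0,\ldots,\omega_{k-1}$, on the u-cylinder $D=[\omega_i=\tilde\omega_i,i\le n+k]$ (a u-cylinder of length $k$ of $\Sigma^u$) the second coordinate of $F^k$ is a single point $y'=\Phi_{\tilde\omega_{k-1}}\cdots\Phi_{\tilde\omega_0}(y)$, and $\sigma^kD=[\omega_i=\tilde\omega_{i+k},i\le n]$ is a full u-set. By $F$-invariance $\sigma^k(A_y\cap D)=A_{y'}\cap\sigma^kD$, and the ratio-preserving property of $\mu^u$ (Lemma~\ref{lemma bounded distortion}) gives
\[
\frac{\mu^u\big(A_{y'}\cap\sigma^kD\big)}{\mu^u\big(\sigma^kD\big)}=\frac{\mu^u\big(A_y\cap D\big)}{\mu^u(D)},
\]
whose right-hand side converges to $\chi_{A_y}(\tilde\omega)$ as $k\to\infty$ by the differentiation theorem (Lemma~\ref{differentiation theorem}). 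The dual computation, using $F^{-k}(\omega,y)=(\sigma^{-k}\omega,\Phi_{\omega_{-k}}^{-1}\cdots\Phi_{\omega_{-1}}^{-1}(y))$, which is frozen along s-cylinders, transports the $\mu^s$-density of $A_y$ at $\tilde\omega$ onto a full s-set in another fibre.

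The hard part will be that both the fibre $y'$ and the s-/u-set produced by this transport move with $k$: the computation shows only that $A_{y'}$ fills a full u-set along the forward orbit of $(\tilde\omega,y)$ (and an s-analogue along the backward orbit), not that $A_y$ fills the u-set $\Sigma^u$ sitting over $\tilde\omega$ in the original fibre. Bringing this moving target back is where I expect the real work to lie, and it is exactly the point at which the standing conservativity hypothesis on $F$ must be used: recurrence of the orbit $F^{\pm k}(\tilde\omega,y)$ returns the density-$1$ conclusion to a neighbourhood of $(\tilde\omega,y)$, and the bounded-distortion estimates of Lemma~\ref{lemma bounded distortion} then upgrade it to $\mu^u(A_y\cap\Sigma^u)=\mu^u(\Sigma^u)$ together with its s-counterpart. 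With both saturations established, Lemma~\ref{saturation lemma} delivers $[\omega_n=\overline\omega_n]\subseteq A_y$.
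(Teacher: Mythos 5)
Your setup coincides with the paper's: reduce to Lemma~\ref{saturation lemma} via the local cylinder coordinates, observe that $F^k$ is ``frozen'' on a u-cylinder of length $k$ (so that $\sigma^k$ maps $A_y\cap[\omega_i=\tilde\omega_i,i\le n+k]$ onto $A_{y'}\cap[\omega_i=\tilde\omega_{i+k},i\le n]$ for a single $y'$), and combine the ratio-preserving property of Lemma~\ref{lemma bounded distortion} with the differentiation theorem of Lemma~\ref{differentiation theorem}. The problem is that you stop exactly at the decisive step and the patch you gesture at does not work. Transporting forward, you obtain that the density of $A_{y'_k}$ in the full u-set through $\sigma^k\tilde\omega$ tends to $1$ as $k\to\infty$; but both the fibre $y'_k$ and the u-set change with $k$, so no limit can be taken, and recurrence of the orbit of $(\tilde\omega,y)$ does not ``return the conclusion'': there is no continuity of $y\mapsto A_y$, so knowing that $F^k(\tilde\omega,y)$ lands near $(\tilde\omega,y)$ gives no control of $A_y$ on the u-set through $\tilde\omega$ in terms of $A_{y'_k}$ on a different u-set in a different fibre.

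The paper closes this by running your computation in the opposite direction, which makes the moving target disappear. Fix $\delta>0$; by Lemma~\ref{differentiation theorem} and Egorov's theorem there is a positive-measure $A''\subset A$ on which the densities $f_k(\tilde\omega,\tilde y)=\mu^u(A_{\tilde y}\cap[\omega_i=\tilde\omega_i,i\le n+k])/\mu^u[\omega_i=\tilde\omega_i,i\le n+k]$ exceed $1-\delta$ for all $k>k_0$, \emph{uniformly} on $A''$. Conservativity of $F$ then gives, for a.e.\ $(\hat\omega,y)\in A''$, some $k>k_0$ with $(\tilde\omega,\tilde y)=F^{-k}(\hat\omega,y)\in A''$; the density estimate is applied at the \emph{pulled-back} point and at the scale equal to the return time, and pushing forward by $F^k$ expands that length-$k$ u-cylinder onto the full u-set through $\hat\omega$ inside the original fibre $y$, ratio-preservingly. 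Hence $\mu^u(A_y\cap[\omega_i=\hat\omega_i,i\le n])>1-\delta$ with $\delta$ arbitrary. The two ingredients missing from your sketch are precisely this reversal of direction (pull back to where the density is high, push forward to land exactly on the target) and the Egorov uniformization, without which the return time $k$ and the pointwise rate of convergence of $f_k$ cannot be decoupled. With those added, your argument becomes the paper's proof.
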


Assume Lemma \ref{lemma saturation of cylinders} has been proved. Each non-trivial $A_y$ intersects
some cylinder $[\omega_0=\overline\omega_0]$, and then $[\omega_0=\overline\omega_0]\subset A_y$.
Because $\Omega=\Omega(P)$ and $P$ is an irreducible matrix, there is $n\ge 1$ such that
\begin{align*}
\mu([\omega_0=\overline\omega_0]\cap[\omega_n=\tilde\omega_n])>0\ \ \text{ for any }\overline\omega,\tilde\omega\in\Omega.
\end{align*}
In particular, $\nu(A_y\cap[\omega_n=\tilde\omega_n])>0$ for any $\tilde\omega\in\Omega$.
Again by Lemma \ref{lemma saturation of cylinders}, it follows that
$[\omega_n=\tilde\omega_n]\subset A_y$ for any $\tilde\omega\in\Omega$, and so
\begin{align*}
\Omega=\bigcup_{\tilde\omega\in\Omega}[\omega_n=\tilde\omega_n]\subset A_y,
\end{align*}
thus proving that $A_y=\Omega$.

\begin{proof}[Proof of Lemma \ref{lemma saturation of cylinders}.]
According to Lemma \ref{saturation lemma}, it is enough to prove that
\begin{align}\label{inclusion stable and unstable}
[\omega_i=\hat\omega_i,i\ge n]\ \text{ and }\ [\omega_i=\hat\omega_i,i\le n]\subset A_y
\end{align}
for almost every $\hat\omega\in A_y$. Define measurable functions $\{f_k\}_{k\ge 1}$ on $A$ by
\begin{align*}
f_k(\tilde\omega,\tilde y)=\dfrac{\mu^u\left(A_{\tilde y}\cap[\omega_i=\tilde\omega_i,i\le n+k]\right)}{\mu^u[\omega_i=\tilde\omega_i,i\le n+k]}\,\cdot
\end{align*}
By Lemma \ref{differentiation theorem},
\begin{align}\label{limit densities}
\lim_{k\rightarrow\infty}f_k(\tilde\omega,\tilde y)=1\ \ \ \ \ \text{a.e. }(\tilde\omega,\tilde y)\in A.
\end{align}

Assume first that (\ref{limit densities}) holds uniformly in $A$.
Fix $\delta>0$ and let $k_0\ge 1$ for which $f_k>1-\delta$ for all $k>k_0$.
Because $F$ is conservative, for almost every $(\hat\omega,y)\in A$ there is $k>k_0$ such that
$(\tilde\omega,\tilde y)=F^{-k}(\hat\omega,y)\in A$, and then
\begin{align}\label{density in preimage}
\dfrac{\mu^u\left(A_{\tilde y}\cap[\omega_i=\tilde\omega_i,i\le n+k]\right)}{\mu^u[\omega_i=\tilde\omega_i,i\le n+k]}>1-\delta.
\end{align}

\begin{figure}[hbt!]
\def\svgwidth{12.4cm}
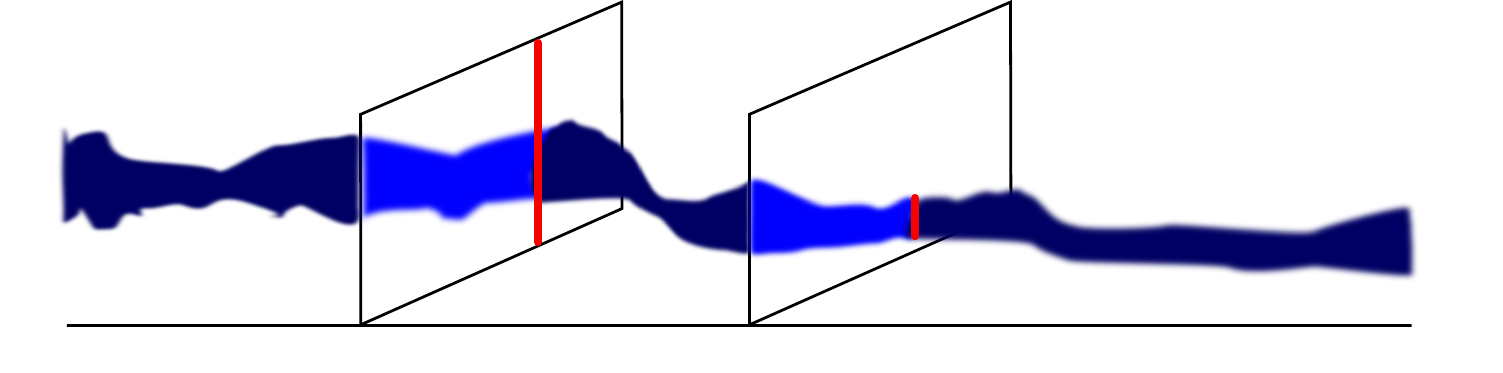
\caption{The saturation of $\Omega\times\{y\}$.}\label{picture 1}
\end{figure}

Because $F^k([\omega_i=\tilde\omega_i,i\le n+k]\times\{\tilde y\})=[\omega_i=\hat\omega_i,i\le n]\times\{y\}$,
Lemma \ref{lemma bounded distortion} and relation (\ref{density in preimage}) give that
\begin{eqnarray*}
\mu^u\left(A_y\cap [\omega_i=\hat\omega_i,i\le n]\right)
&=&\dfrac{\mu^u\left(A_y\cap [\omega_i=\hat\omega_i,i\le n]\right)}{\mu^u[\omega_i=\hat\omega_i,i\le n]}\\
&&\\
&=&\dfrac{\mu^u\left(\sigma^k\left(A_{\tilde y}\cap[\omega_i=\tilde\omega_i,i\le n+k]\right)\right)}
{\mu^u(\sigma^k[\omega_i=\tilde\omega_i,i\le n+k])}\\
&&\\
&=&\dfrac{\mu^u\left(A_{\tilde y}\cap[\omega_i=\tilde\omega_i,i\le n+k]\right)}{\mu^u[\omega_i=\tilde\omega_i,i\le n+k]}\\
&&\\
&>&1-\delta.
\end{eqnarray*}
Both $(\hat\omega,y)\in A$ and $\delta>0$ are arbitrary, and thus
$[\omega_i=\hat\omega_i,i\le n]\subset A_y$ for almost every $(\hat\omega,y)\in A$.
Analosgously, $[\omega_i=\hat\omega_i,i\ge n]\subset A_y$ for almost every $(\hat\omega,y)\in A$,
and this establishes (\ref{inclusion stable and unstable}).

In general, the convergence in (\ref{limit densities}) is not uniform. Instead, do the following:
for each $A'\subset A$ with finite measure and each $\varepsilon>0$, Egorov's theorem assures
the existence of $A''\subset A'$ such that
\begin{enumerate}
\item $(\mu\times\nu)(A'\backslash A'')<\varepsilon$, and
\item $\{f_k\}_{k\ge 1}$ converges uniformly in $A''$.
\end{enumerate}
By the previous argument, (\ref{inclusion stable and unstable}) holds almost everywhere in $A''$.
This concludes the proof of the lemma.
\end{proof}

\begin{remark}
In \cite{wos1982random}, Wo{\' s} proved a random ergodic theorem for sub-Markovian operators
in $L^\infty$. Because Koopman-von Neumann operators of measure-preserving automorphisms of
non-atomic standard probability spaces are always sub-Markov\-ian, his result characterizes
ergodicity for random dynamical systems over Bernoulli systems. It should be
interesting to mix our tools with Wo{\'s}' in order to extend his theorem to
skew products over shifts of finite type.
\end{remark}

It is not clear to us under which conditions $F$ is conservative. For instance, it can happen that
each $\Phi_i$ is conservative and $F$ is not. Here is an example communicated by Jon Aaronson:
let $Y=\{-1,1\}^\Z\times\Z^3$ and $\nu=$ Bernoulli measure on $\{-1,1\}^\Z\times$ counting measure
on $\Z^3$, and let $\Phi_0,\Phi_1,\Phi_2$ be measure-preserving transformations on $(Y,\nu)$ given by
\begin{eqnarray*}
\Phi_0(\theta,y)&=&(\varrho(\theta),y+\theta_0(1,0,0))\\
\Phi_1(\theta,y)&=&(\varrho(\theta),y+\theta_0(0,1,0))\\
\Phi_2(\theta,y)&=&(\varrho(\theta),y+\theta_0(0,0,1)),
\end{eqnarray*}
where $\varrho$ is the shift map on $\{-1,1\}^\Z$ and $\theta\in\{-1,1\}^\Z$.
Each $\Phi_i$ is isomorphic to a random walk on $\Z$, and so is conservative.
But $F$ is a random walk on $\Z^3$, which is not conservative.

Corollary \ref{main corollary} considers a class of conservative transformations for which
the skew product is conservative, as we'll now see.

\begin{proof}[Proof of Corollary \ref{main corollary}.]
By Theorem \ref{main thm 3}, we just need to prove that $F$ is conservative. Consider the skew product
$$
\begin{array}{rcrcl}
H&:&\Omega\times X&\longrightarrow &\Omega\times X\\
 & &(\omega,x)    &\longmapsto     &(\sigma\omega,T_{\omega_0}x).
\end{array}
$$
$H$ is a measure-preserving transformation in the probability space
$(\Omega\times X,\mu\times\nu)$. In particular, it is conservative. By assumption,
$\{T_0,\ldots,T_{k-1}\}$ is an ergodic system. Thus, Theorem \ref{main thm 3} implies
that $H$ is ergodic.

Now note that $F(\omega,x,t)=(H(\omega,x),t+\phi_{\omega_0}(x))$
is a skew product over $H$ and
\begin{align}\label{condition joint conservative}
\int_{\Omega\times X}\phi_{\omega_0}(x)d\mu(\omega)d\nu(x)=\sum_{i=0}^{k-1}\mu([\omega_0=i])\int_X\phi_i(x)d\nu(x)=0.
\end{align}
By Theorem \ref{theorem atkinson}, it follows that $F$ is conservative, and the proof is finished.
\end{proof}

Corollary \ref{main corollary} holds whenever the $\phi_i$'s satisfy equality
(\ref{condition joint conservative}). This is also a necessary condition. For example,
let $\phi_0=0$ and $\phi_1$ without zero mean such that the closed subgroup generated
by the essential image of $\phi_1$ is $\R$. By Theorem \ref{main thm 1}
(to be proved in \S\ref{section proof of thm 1}), $\{\Phi_0,\Phi_1\}$ is an
ergodic system. By Theorem \ref{theorem atkinson}, $F$ is not conservative. If $F$ is
also ergodic, then it is isomorphic to the translation $n\mapsto n+1$ on the
integers (see Proposition 1.2.1 of \cite{aaronson1997introduction}).
But we can choose $\phi_1$ properly such that this is not the case.

\section{Proof of Theorem \ref{main thm 1}}\label{section proof of thm 1}

Let $G$ be the closed subgroup generated by the essential image of $\phi$.
$G$ is either equal to $\alpha\Z$ or $\R$. Assume $G=\alpha\Z$.
If $\alpha=0$, then $F$ is clearly not ergodic.
If $\alpha\not=0$, then
\begin{align*}
A=\Omega\times X\times\left(\alpha\Z+[0,\alpha/4]\right)
\end{align*}
is a non-trivial $F$-invariant set, and again $F$ is not ergodic.

Now assume $G=\R$. We want to prove that $F$ is ergodic. Let
\begin{align*}
\Phi_0(x,t)=(T_0x,t)\ \ \text{ and }\ \ \Phi_1(x,t)=(T_1x,t+\phi(x)).
\end{align*}
By Corollary \ref{main corollary}, it is enough to prove that $\{\Phi_0,\Phi_1\}$ is an ergodic system.
Let $g(x,t)$ be a bounded function, invariant under $\Phi_0$ and $\Phi_1$. Then
\begin{align*}
g(T_0x,t)=(g\circ\Phi_0)(x,t)=g(x,t).
\end{align*}
Because $T_0$ is ergodic, $g$ does not depend on the first coordinate,
i.e. there is $h(t)$ such that $g(x,t)=h(t)$ almost everywhere.
It remains to prove that $h$ is constant almost everywhere. Note that
\begin{align*}
h(t+\phi(x))=g(T_1x,t+\phi(x))=(g\circ\Phi_1)(x,t)=g(x,t)=h(t)
\end{align*}
and so $h(t+\phi(x))=h(t)$ for almost every $t\in\R$ and almost every $x\in X$.
Thus the set
\begin{align*}
\mathcal P=\{s\in\R:h(t+s)=h(t)\text{ for almost every }t\in\R\}
\end{align*}
contains the essential image of $\phi$.

We claim that $\mathcal P$ is a closed subgroup of $\R$. It is clearly a subgroup.
By the Riesz representation theorem,
\begin{align*}
\mathcal P=\left\{s\in\R:\int_{\R}h(t+s)u(t)dt=\int_{\R}h(t)u(t)dt\text{ for every }u\in C_c(\R)\right\},
\end{align*}
where $C_c(\R)$ is the set of continuous functions $u:\R\rightarrow\R$ of compact support.
By the dominated convergence theorem, $\mathcal P$ is closed. Thus $\mathcal P=\R$,
i.e. $h$ is constant almost everywhere. This concludes the proof.

\section{Local limit theorem: proof of Theorem \ref{main thm 4}}\label{section proof of thm 4}

We now prove Theorem \ref{main thm 4}. To simplify notation, denote $c_i^x=\phi(T^ix)$ and
\begin{align*}
s_n^x=\sum_{i=0}^{n-1}\phi(T^ix)=\sum_{i=0}^{n-1}c_i^x.
\end{align*}
As we have seen in the introduction, the third coordinate of $F^n(\omega,x,t)$ is equal to
\begin{align*}
t+\sum_{i=0}^{n-1}\omega_i c_i^x=t+\dfrac{1}{2}\sum_{i=0}^{n-1}c_i^x X_i+\dfrac{s_n^x}{2}=t+\frac{1}{2}S_n^x+\frac{1}{2}s_n^x\,,
\end{align*}
where $\{S_n^x\}_{n\ge 1}$ is the martingale defined by
\begin{align*}
S_n^x=c_0^x X_0+\cdots+c_{n-1}^x X_{n-1}
\end{align*}
and $\{X_n\}_{n\ge 1}$ are independent identically distributed random variables, each with law
$\mathbb P[X_n=1]=\mathbb P[X_n=-1]=\frac{1}{2}$.

Because $\phi$ satisfies (\ref{assumption on phi}) and $\{S_n^x\}_{n\ge 1}$ is a martingale with
bounded increments, the sequences $\sum_{i=0}^{n-1}\omega_i\phi(T^ix),n\ge 1$, satisfy
both the central and functional central limit theorem \cite{hall1980martingale}.
Thus the trajectories of $F$ have a normal diffusion. Furthermore, because the trajectories
of $T$ equidistribute in $X$, $\{S_n^x\}_{n\ge 1}$ satisfies the local limit theorem
\begin{align*}
\lim_{n\to\infty}\sqrt{2\pi n}\cdot\mathbb P[S_n^x\in [a,b]]=b-a
\end{align*}
and even the local limit theorem with moving targets
\begin{align}\label{weak LLT}
\lim_{n\to\infty}\sqrt{2\pi n}\cdot\mathbb P[S_n^x\in [a,b]-s_n]=b-a
\end{align}
where $\{s_n\}_{n\ge 1}$ is a sequence such that $s_n/\sqrt{n}\rightarrow 0$.
The proof is similar to those in \S 10.4 of \cite{breiman1968probability}.

The local limit theorems above do not imply rational ergodicity, because different $x$'s
may give different rates of convergence. Rational ergodicity does not take into
account multiplicative constants, so what we need is to bound the expression in
the limit (\ref{weak LLT}) away from zero and infinity, uniformly in both $x$ and $n$.
This is the content of Theorem \ref{main thm 4}, which we'll now prove.

We assume, after a proper dilation, that $t=1$.
The proof proceeds as follows: firstly, we use the unique ergodicity of $T$ to estimate
the characteristic function of $S_n^x$, uniformly in $x$ and $n$.
Secondly, we use Fourier analysis and this estimate to establish the result.

Given a random variable $Y$, let $\varphi_Y:\R\rightarrow\mathbb C$ be its characteristics function:
\begin{align*}
\varphi_Y(t)=\mathbb E[\exp(itY)].
\end{align*}

\begin{lemma}\label{lemma estimate characteristic}
There exist $\delta,a,b,n_0>0$ such that for every $n>n_0$ and every $x\in X$
\begin{align*}
\exp(-at^2)\le \varphi_{S_n^x}\left(\dfrac{t}{\sqrt{n}}\right)\le\exp(-bt^2)\,,\ \ \forall\,|t|\le\delta\sqrt{n}.
\end{align*}
\end{lemma}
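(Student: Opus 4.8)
The plan is to compute the characteristic function explicitly and then reduce the whole estimate to a uniform control of the empirical variance $\tfrac1n\sum_{i=0}^{n-1}\phi(T^ix)^2$, which is exactly what unique ergodicity of $T$ provides. First I would use independence of the $X_i$ to factor
$$\varphi_{S_n^x}(s)=\mathbb E\left[\exp\left(is\sum_{i=0}^{n-1}c_i^xX_i\right)\right]=\prod_{i=0}^{n-1}\mathbb E[\exp(isc_i^xX_i)]=\prod_{i=0}^{n-1}\cos(sc_i^x),$$
since $\mathbb E[\exp(iscX)]=\tfrac12 e^{isc}+\tfrac12 e^{-isc}=\cos(sc)$. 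Evaluating at $s=t/\sqrt n$ gives $\varphi_{S_n^x}(t/\sqrt n)=\prod_{i=0}^{n-1}\cos(tc_i^x/\sqrt n)$. Because $X$ is compact and $\phi$ continuous, $M:=\sup_X|\phi|<\infty$; writing $u_i=tc_i^x/\sqrt n$ one has $|u_i|\le\delta M$ whenever $|t|\le\delta\sqrt n$. Choosing $\delta$ small enough that $\delta M<\pi/2$ keeps every factor positive, so I may take logarithms and work with $\log\varphi_{S_n^x}(t/\sqrt n)=\sum_{i=0}^{n-1}\log\cos u_i$.

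Next, two elementary inequalities for $\log\cos$ produce the two bounds. For the upper estimate on $\varphi$ I would use $\log\cos u\le-u^2/2$ on $(-\pi/2,\pi/2)$, which yields
$$\log\varphi_{S_n^x}(t/\sqrt n)\le-\tfrac12\sum_{i=0}^{n-1}u_i^2=-\tfrac{t^2}{2}\cdot\tfrac1n\sum_{i=0}^{n-1}(c_i^x)^2.$$
For the lower estimate I would use $\log\cos u\ge-u^2/2-Cu^4$ on $|u|\le\delta M$, giving
$$\log\varphi_{S_n^x}(t/\sqrt n)\ge-\tfrac{t^2}{2}\cdot\tfrac1n\sum_{i=0}^{n-1}(c_i^x)^2-C\sum_{i=0}^{n-1}u_i^4.$$
The quartic remainder is controlled precisely by the hypothesis $|t|\le\delta\sqrt n$: since $\sum_{i=0}^{n-1}u_i^4=\tfrac{t^4}{n^2}\sum_{i=0}^{n-1}(c_i^x)^4\le\tfrac{t^4M^4}{n}$ and $t^4/n\le\delta^2t^2$, I get $C\sum_{i=0}^{n-1}u_i^4\le C\delta^2M^4t^2$, an error of size $O(\delta^2)\,t^2$ uniformly in $x$ and $n$.

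The uniformity in $x$, which is the crux of the statement, comes from unique ergodicity of $T$: as $\phi^2$ is continuous, the Birkhoff averages converge uniformly,
$$\frac1n\sum_{i=0}^{n-1}\phi(T^ix)^2\longrightarrow\int_X\phi^2\,d\nu=:\Sigma^2\quad\text{uniformly in }x,$$
and since $\phi$ is non-zero and continuous, $\Sigma^2>0$. Hence there is $n_0$ such that for all $n>n_0$ and all $x$ the average lies in $[\tfrac12\Sigma^2,2\Sigma^2]$. Feeding this into the two displayed inequalities gives $\exp(-at^2)\le\varphi_{S_n^x}(t/\sqrt n)\le\exp(-bt^2)$ for all $|t|\le\delta\sqrt n$, with $b=\tfrac14\Sigma^2$ and $a=\Sigma^2+C\delta^2M^4$ (shrinking $\delta$ further if needed so that $a\ge b>0$). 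The main obstacle is exactly this uniformity: without unique ergodicity the empirical variance would depend on $x$ in an uncontrolled way, and that is where the hypothesis on $T$ enters; the restriction $|t|\le\delta\sqrt n$ is the second essential ingredient, since it is what tames the quartic error term relative to the quadratic main term.
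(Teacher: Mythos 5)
Your proposal is correct and follows essentially the same route as the paper: factor $\varphi_{S_n^x}(t/\sqrt n)=\prod_i\cos(tc_i^x/\sqrt n)$, sandwich $\log\cos u$ between two quadratics for $|u|$ small, and invoke the uniform (in $x$) convergence of $\tfrac1n\sum_{i}\phi(T^ix)^2$ to $\int\phi^2\,d\nu>0$. The only differences are cosmetic -- you use the sharper bounds $-u^2/2-Cu^4\le\log\cos u\le -u^2/2$ and absorb the quartic term via $t^2/n\le\delta^2$, where the paper uses the cruder $-2u^2\le\log\cos u\le -u^2/8$; and you correctly credit the uniformity in $x$ to unique ergodicity, which the paper attributes to Birkhoff's theorem.
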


\begin{proof}
We have
\begin{align*}
\varphi_{X_0}(t)=\cos t=1-\dfrac{t^2}{2}+O(t^4)
\end{align*}
and so, for $|t|$ small,
\begin{align*}
\log\varphi_{X_0}(t)\le \log\left(1-\dfrac{t^2}{4}\right)\le -\dfrac{t^2}{8}
\end{align*}
and
\begin{align*}
\log\varphi_{X_0}(t)\ge \log(1-t^2)\ge -2t^2.
\end{align*}
Let $C=\sup_{x\in X}|\phi(x)|$ and take $\delta>0$ small so that
\begin{align}\label{estimate characteristic function}
\exp(-2t^2)\le\varphi_{X_0}(t)\le\exp(-t^2/8)\,,\ \ \forall\,|t|<\delta C.
\end{align}
Because
\begin{align*}
\varphi_{S_n^x}\left(\dfrac{t}{\sqrt{n}}\right)=\varphi_{X_0}\left(\dfrac{c_0^xt}{\sqrt{n}}\right)\cdots\varphi_{X_0}\left(\dfrac{c_{n-1}^xt}{\sqrt{n}}\right),
\end{align*}
(\ref{estimate characteristic function}) implies that, for every $|t|<\delta\sqrt{n}$,
\begin{align*}
\exp\left(-\frac{2\sum_{i=0}^{n-1}(c_i^x)^2}{n}\cdot t^2\right)\le
\varphi_{S_n^x}\left(\dfrac{t}{\sqrt{n}}\right)\le\exp\left(-\frac{\sum_{i=0}^{n-1}(c_i^x)^2}{8n}\cdot t^2\right)\cdot
\end{align*}
By Birkhoff's ergodic theorem, there is $n_0>0$ such that
\begin{align*}
\dfrac{1}{2}\int\phi^2d\nu\le \dfrac{1}{n}\displaystyle\sum_{i=0}^{n-1}(c_i^x)^2\le 2\int\phi^2d\nu\,,\ \ \
\forall\,n\ge n_0,\forall\,x\in X.
\end{align*}
Take
\begin{align*}
a=4\int\phi^2d\nu\ \text{ and }\ b=\dfrac{1}{16}\int\phi^2d\nu
\end{align*}
to conclude the proof of the lemma.
\end{proof}

Let $\chi_{[-1,1]}$ denote the indicator function of the interval $[-1,1]$, and
fix functions $g,h:\R\rightarrow\R$ such that\footnote{$\hat g,\hat h$ denote the Fourier transforms of $g,h$.}
\begin{enumerate}[(i)]
\item $g\le \chi_{[-1,1]}\le h$,
\item $\hat{g}(0)>0$, and
\item $\hat{g},\hat{h}$ are continuous with support contained in $[-\Delta,\Delta]$, for some
$\Delta>0$.
\end{enumerate}
It is not hard to exhibit such functions. Take, for example,
\begin{align*}
g=\dfrac{1}{12}\left[\left(\dfrac{\widehat{\chi_{[-4,4]}}}{4}\right)^4-
\left(\dfrac{\widehat{\chi_{[-4,4]}}}{4}\right)^2\right]\ \text{ and }\
h=\widehat{\chi_{[-1,1]}}^2.
\end{align*}
By (ii) and (iii), we can assume that $\delta>0$ satisfies
\begin{enumerate}[(iv)]
\item $\hat g|_{[-\delta,\delta]}>\hat{g}(0)/2$.
\end{enumerate}

\begin{proof}[Proof of Theorem \ref{main thm 4}]
We want to estimate
\begin{align*}
\sqrt{n}\cdot\mathbb P\left[S_n^x\in [-1,1]-s_n^x\right]=
\sqrt{n}\cdot\mathbb E\left[\chi_{[-1,1]}(S_n^x+s_n^x)\right].
\end{align*}
Because
\begin{align*}
\sqrt{n}\cdot\mathbb E[g(S_n^x+s_n^x)]\le\sqrt{n}\cdot\mathbb E\left[\chi_{[-1,1]}(S_n^x+s_n^x)\right]
\le\sqrt{n}\cdot\mathbb E[h(S_n^x+s_n^x)]\,,
\end{align*}
it is enough to estimate $\sqrt{n}\cdot\mathbb E[g(S_n^x+s_n^x)]$ away from zero and $\sqrt{n}\cdot\mathbb E[h(S_n^x+s_n^x)]$
away from infinity.\\

\noindent {\bf Part 1.} Bound of $\sqrt{n}\cdot\mathbb E[g(S_n^x+s_n^x)]$ away from zero.\\

By the Fourier inverse theorem,
\begin{eqnarray*}
\sqrt{n}\cdot\mathbb E[g(S_n^x+s_n^x)]&=&\sqrt{n}\cdot\mathbb E\left[\int_{\R}\hat{g}(t)\exp(it(S_n^x+s_n^x))dt\right]\\
&=&\sqrt{n}\int_{\R}\hat{g}(t)\mathbb E[\exp(it(S_n^x+s_n^x))]dt\\
&=&\sqrt{n}\int_{\R}\hat{g}(t)\varphi_{S_n^x+s_n^x}(t)dt\\
&=&\sqrt{n}\int_{-\delta}^{\delta}\hat{g}(t)\varphi_{S_n^x+s_n^x}(t)dt+
   \sqrt{n}\int_{\delta<|t|<\Delta}\hat{g}(t)\varphi_{S_n^x+s_n^x}(t)dt.
\end{eqnarray*}
We claim that there are $\lambda<1$ and $n_0\ge 1$ such that
\begin{align}\label{characteristic far from zero}
|\varphi_{S_n^x+s_n^x}(t)|\le\lambda^n\,,\ \ \forall\,x\in X,\forall\,n>n_0,\forall\,\delta<|t|<\Delta.
\end{align}
To prove this, take $\varepsilon,\rho<1$ such that $[\varepsilon,2\varepsilon]\subset\phi(X)$ and
\begin{align*}
|\cos(st)|<\rho\,,\ \ \forall\,s\in[\varepsilon,2\varepsilon],\forall\,\delta<|t|<\Delta.
\end{align*}
Because $\phi$ is continuous and $T$ is uniquely ergodic, there is $n_0>0$ such that
\begin{align*}
\dfrac{\#\{0\le i<n:T^ix\in\phi^{-1}[\varepsilon,2\varepsilon]\}}{n}>\alpha\, ,\ \
\forall\,x\in X,\forall\, n>n_0,
\end{align*}
where $2\alpha=\nu(\phi^{-1}[\varepsilon,2\varepsilon])>0$. Thus, for every $x\in X,n>n_0$ and $\delta<|t|<\Delta$

\begin{eqnarray*}
|\varphi_{S_n^x+s_n^x}(t)|&=&|\varphi_{S_n^x}(t)|\\
                    &=&|\cos(c_0^xt)\cdots \cos(c_{n-1}^xt)|\\
                    &\le& \prod_{0\le i<n\atop{c_i^x\in[\varepsilon,2\varepsilon]}}|\cos(c_i^xt)|\\
                    &<& \rho^{\#\{0\le i<n:c_i^x\in[\varepsilon,2\varepsilon]\}}\\
                    &=&\rho^{\#\{0\le i<n:T^ix\in\phi^{-1}[\varepsilon,2\varepsilon]\}}\\
                    &<&\rho^{\alpha n}\\
                    &=&\lambda^n,
\end{eqnarray*}
where $\lambda=\rho^\alpha<1$. This establishes (\ref{characteristic far from zero}).
Then
\begin{align}\label{estimate integral close to zero}
\left|\sqrt{n}\int_{\delta<|t|<\Delta}\hat{g}(t)\varphi_{S_n^x+s_n^x}(t)dt\right|
<2\Delta\|\hat{g}\|_\infty\cdot\sqrt{n}\cdot\lambda^n.
\end{align}

To estimate the integral close to zero, first apply a change of variables:
\begin{eqnarray*}
\sqrt{n}\int_{-\delta}^{\delta}\hat{g}(t)\varphi_{S_n^x+s_n^x}(t)dt&=&
\int_{-\delta\sqrt{n}}^{\delta\sqrt{n}}\hat{g}\left(\dfrac{t}{\sqrt{n}}\right)\varphi_{S_n^x+s_n^x}\left(\dfrac{t}{\sqrt{n}}\right)dt\\
&=&\int_{-\delta\sqrt{n}}^{\delta\sqrt{n}}\hat{g}\left(\dfrac{t}{\sqrt{n}}\right)\varphi_{S_n^x}\left(\dfrac{t}{\sqrt{n}}\right)
\exp\left(it\dfrac{s_n^x}{\sqrt{n}}\right)dt\\
&=&\int_{-\delta\sqrt{n}}^{\delta\sqrt{n}}\hat{g}\left(\dfrac{t}{\sqrt{n}}\right)\varphi_{S_n^x}\left(\dfrac{t}{\sqrt{n}}\right)
\cos\left(t\dfrac{s_n^x}{\sqrt{n}}\right)dt.
\end{eqnarray*}
Let $\beta>0$ such that $\cos|_{[-\beta,\beta]}>1/2$, let
\begin{align*}
m_n^x=\min\left\{\dfrac{\beta\sqrt{n}}{s_n^x}\ ,\,\dfrac{\delta\sqrt{n}}{2}\right\},
\end{align*}
and divide the former integral into two parts accordingly to $m_n^x$:
\begin{eqnarray*}
\sqrt{n}\int_{-\delta}^{\delta}\hat{g}(t)\varphi_{S_n^x+s_n^x}(t)dt
&=&\int_{|t|<m_n^x}
\hat{g}\left(\dfrac{t}{\sqrt{n}}\right)\varphi_{S_n^x}\left(\dfrac{t}{\sqrt{n}}\right)
\cos\left(t\dfrac{s_n^x}{\sqrt{n}}\right)dt+\\
&&\int_{m_n^x<|t|<\delta\sqrt{n}}
\hat{g}\left(\dfrac{t}{\sqrt{n}}\right)\varphi_{S_n^x}\left(\dfrac{t}{\sqrt{n}}\right)\cos\left(t\dfrac{s_n^x}{\sqrt{n}}\right)dt\\
&&\\
&=&I_1+I_2.
\end{eqnarray*}
By the choice of $g$ and $\beta$, the fact that $m_n^x\rightarrow\infty$ as
$n\rightarrow\infty$, and Lemma \ref{lemma estimate characteristic}, we have
\begin{align*}
I_1\ge\dfrac{\hat g(0)}{4}\int_{|t|<m_n^x}\exp(-at^2)dt\ge\dfrac{\hat g(0)}{4}\int_{-1}^{1}\exp(-at^2)dt
\end{align*}
for every sufficiently large $n$ and arbitrary $x$. Similarly,
\begin{eqnarray*}
I_2&\ge&-\int_{m_n^x<|t|<\delta\sqrt{n}}\left|\hat{g}\left(\dfrac{t}{\sqrt{n}}\right)
        \varphi_{S_n^x}\left(\dfrac{t}{\sqrt{n}}\right)\cos\left(t\dfrac{s_n^x}{\sqrt{n}}\right)\right|dt\\
   &\ge&-\int_{m_n^x<|t|<\delta\sqrt{n}}\left\|\hat{g}\right\|_{\infty}\exp(-bt^2)dt\\
   &\ge&-\left\|\hat{g}\right\|_{\infty}\int_{|t|>m_n^x}\exp(-bt^2)dt.
\end{eqnarray*}
Thus
\begin{align*}
\sqrt{n}\int_{-\delta}^{\delta}\hat{g}(t)\varphi_{S_n^x+s_n^x}(t)dt
\ge\dfrac{\hat g(0)}{4}\int_{-1}^{1}\exp(-at^2)dt-\left\|\hat{g}\right\|_{\infty}\int_{|t|>m_n^x}\exp(-bt^2)dt
\end{align*}
is bounded away from zero if $n$ is large, uniformly in $x$. This, together with
(\ref{estimate integral close to zero}), proves Part 1.\\

\noindent {\bf Part 2.} Bound of $\sqrt{n}\cdot\mathbb E[h(S_n^x+s_n^x)]$ away from infinity.\\

Analogously as in Part 1, inequality (\ref{characteristic far from zero}) gives
\begin{align*}
\left|\sqrt{n}\int_{\delta<|t|<\Delta}\hat{h}(t)\varphi_{S_n^x+s_n^x}(t)dt\right|<2\Delta\|\hat{h}\|_\infty\cdot\sqrt{n}\cdot\lambda^n
\end{align*}
and Lemma \ref{lemma estimate characteristic} gives
\begin{eqnarray*}
\sqrt{n}\int_{-\delta}^{\delta}\hat{h}(t)\varphi_{S_n^x+s_n^x}(t)dt&=&
\int_{-\delta\sqrt{n}}^{\delta\sqrt{n}}\hat{h}\left(\dfrac{t}{\sqrt{n}}\right)\varphi_{S_n^x+s_n^x}\left(\dfrac{t}{\sqrt{n}}\right)dt\\
&\le&\int_{-\delta\sqrt{n}}^{\delta\sqrt{n}}\hat{h}\left(\dfrac{t}{\sqrt{n}}\right)\varphi_{S_n^x}\left(\dfrac{t}{\sqrt{n}}\right)dt\\
&\le&\|\hat{h}\|_{\infty}\int_{-\delta\sqrt{n}}^{\delta\sqrt{n}}\exp(-bt^2)dt\\
&\le&\|\hat{h}\|_{\infty}\int_{\R}\exp(-bt^2)dt,
\end{eqnarray*}
which is finite.
\end{proof}

The above proof is robust: given a compact set $\Lambda$, there are constants $K,n_0>0$ such that
Theorem \ref{main thm 4} is valid for any sequence $\{s_n^x+t\}_{n\ge 1}$, $t\in\Lambda$.

\section{Rational ergodicity: proof of Theorem \ref{main thm 2}}\label{section proof of thm 2}

Let $R_n$ be the return function of $\Omega\times X\times\left[-\frac{1}{2},\frac{1}{2}\right]$
with respect to $F$:
\begin{eqnarray*}
R_n(\omega,x,t)&=&
\#\left\{1\le i\le n:F^i(\omega,x,t)\in\Omega\times X\times\left[-\frac{1}{2},\frac{1}{2}\right]\right\}\\
               &=&\sum_{i=1}^n\chi_{[-1,1]}(S_i^x(\omega)+s_i^x+2t).
\end{eqnarray*}
We will show that
\begin{align}\label{condition rational ergodicity}
\int_{\Omega\times X\times\left[-\frac{1}{2},\frac{1}{2}\right]}R_n^2\lesssim n\lesssim\left(\int_{\Omega\times X\times\left[-\frac{1}{2},\frac{1}{2}\right]}R_n\right)^2.
\end{align}

Fix $x\in X$ and $t\in[-\frac{1}{2},\frac{1}{2}]$. By Theorem \ref{main thm 4}, we have
\begin{eqnarray*}
\int_{\Omega}R_n(\omega,x,t)&=&\sum_{i=1}^n\int_{\Omega}\chi_{[-1,1]}(S_i^x(\omega)+s_i^x+2t)\\
                            &&\\
                            &=&\sum_{i=1}^n\mathbb P\left[S_i^x\in[-1,1]-(s_i^x+2t)\right]\\
                            &\sim&\sum_{i=n_0+1}^n\mathbb P\left[S_i^x\in[-1,1]-(s_i^x+2t)\right]\\
                            &\sim&\sum_{n_0<i\le n}i^{-1/2}\\
                            &\sim&\int_1^n x^{-1/2}dx\\
                            &\sim&\sqrt{n}
\end{eqnarray*}
and thus
\begin{align}\label{estimate L^1-norm}
\int_{\Omega\times X\times\left[-\frac{1}{2},\frac{1}{2}\right]}R_n\gtrsim\sqrt{n}\,.
\end{align}

Now
\begin{eqnarray*}
\int_{\Omega}R_n(\omega,x,t)^2&=&\sum_{i=1}^n\int_{\Omega}\chi_{[-1,1]}(S_i^x(\omega)+s_i^x+2t)\\
& &+2\sum_{i<j}
\int_{\Omega}\chi_{[-1,1]}(S_i^x(\omega)+s_i^x+2t)\cdot\chi_{[-1,1]}(S_j^x(\omega)+s_j^x+2t)\\
&=&\int_{\Omega}R_n(\omega,x,t)\\
&&+2\sum_{i<j}\mathbb P[S_i^x\in[-1,1]-(s_i^x+2t),S_j^x\in[-1,1]-(s_j^x+2t)]\\
&\sim&\sqrt{n}+\sum_{i<j}\mathbb P[S_i^x\in[-1,1]-(s_i^x+2t),S_j^x\in[-1,1]-(s_j^x+2t)].
\end{eqnarray*}
Observe that $S_j^x(\omega)=S_i^x(\omega)+S_{j-i}^{T^ix}(\sigma^i\omega)$.
Because $S_i^x(\omega)$ depends on the coordinates $\omega_0,\ldots,\omega_{i-1}$
and $S_{j-i}^{T^ix}(\sigma^i\omega)$ depends on the coordinates
$\omega_{i},\ldots,\omega_{j-1}$, the random variables $S_i^x(\omega)$ and $S_{j-i}^{T^ix}(\sigma^i\omega)$
are independent. Thus, whenever $i>n_0$ and $j-i>n_0$
\begin{eqnarray*}
\mathbb P\left[
\begin{array}{c}
S_i^x\in[-1,1]-(s_i^x+2t),\\
S_j^x\in[-1,1]-(s_j^x+2t)
\end{array}
\right]&\le &
\mathbb P\left[
\begin{array}{c}
S_i^x\in[-1,1]-(s_i^x+2t),\\
S_{j-i}^{T^ix}\in[-2,2]-(s_i^x+s_j^x)
\end{array}
\right]\\
&&\\
&=&\mathbb P[S_i^x\in[-1,1]-(s_i^x+2t)]\times\\
&&\mathbb P[S_{j-i}^{T^ix}\in[-2,2]-(s_i^x+s_j^x)]\\
&&\\
     &\lesssim&i^{-1/2}\cdot (j-i)^{-1/2}.
\end{eqnarray*}
It follows that
\begin{eqnarray*}
\sum_{i<j}
\mathbb P\left[
\begin{array}{c}
S_i^x\in[-1,1]-(s_i^x+2t),\\
S_j^x\in[-1,1]-(s_j^x+2t)
\end{array}
\right]
&\lesssim&\sum_{1\le i<j\le n\atop{i,j-i>n_0}}i^{-1/2}\cdot (j-i)^{-1/2}\\
&\le& \left(\sum_{i=1}^n i^{-1/2}\right)^2\\
&&\\
&\sim& n.
\end{eqnarray*}
This implies
\begin{align*}
\int_{\Omega}R_n(\omega,x,t)^2\lesssim n
\end{align*}
for every $x\in X$ and every $t\in[-\frac{1}{2},\frac{1}{2}]$. Thus
\begin{align*}
\int_{\Omega\times X\times\left[-\frac{1}{2},\frac{1}{2}\right]}R_n^2\lesssim n
\end{align*}
which, together with (\ref{estimate L^1-norm}), establishes (\ref{condition rational ergodicity}).
This concludes the proof of Theorem \ref{main thm 2}.

\begin{remark}
A consequence of Theorem \ref{main thm 2} is that $F$ has generalized laws of large
numbers (see \S3.3 of \cite{aaronson1997introduction}). A natural step in the
program is to prove that it satisfies a second-order ergodic theorem in the sense
of \cite{fisher1992integer}.
\end{remark}

\section{Acknowledgements}

P.C. is supported by Fapesp-Brazil.
Y.L. is supported by the European Research Council, grant 239885.
E.P. is supported by CNPq-Brazil.

\bibliography{skew_products_in_infinite_measure}

\end{document}